 \font\smallit=cmti10
\renewcommand{\@seccntformat}[1]{\csname the#1\endcsname. }
\newtheorem{theorem}{Theorem}[section]
 \newtheorem{lemma}[theorem]{Lemma}
 \newtheorem{proposition}[theorem]{Proposition}
 \newtheorem{corollary}[theorem]{Corollary}
 \newtheorem{definition}[theorem]{Definition}
 \newtheorem{remark}[theorem]{Remark}
\begin{document}
\begin{center}
 {\bf Combinatorial Sums $\sum_{k\equiv r(\mbox{mod }
m)}{n\choose k}a^k$ and Lucas Quotients}
 \vskip 20pt
 {\bf Jiangshuai Yang}\\
 {\smallit Key Laboratory of Mathematics Mechanization, NCMIS, Academy of Mathematics and Systems Science, Chinese Academy of Sciences, Beijing 100190, People's Republic of China}\\
 {\tt yangjiangshuai@amss.ac.cn}\\
 \vskip 10pt
 {\bf Yingpu Deng}\\
 {\smallit Key Laboratory of Mathematics Mechanization, NCMIS, Academy of Mathematics and Systems Science, Chinese Academy of Sciences, Beijing 100190, People's Republic of China}\\
 {\tt dengyp@amss.ac.cn}\\
 \end{center}
 \vskip 30pt
%\centerline{\smallit Received: , Revised: , Accepted: , Published: } % We will fill in the dates
 %\vskip 30pt
\centerline{\bf Abstract}

\noindent In this paper, we study the combinatorial sum
$$\sum_{k\equiv r(\mbox{mod }m)}{n\choose k}a^k.$$
By studying this sum, we obtain new congruences for Lucas quotients of two infinite families of Lucas sequences. Only for three Lucas sequences, there are such known results.

\pagestyle{myheadings}
% \markright{\smalltt INTEGERS: 15 (2015)\hfill}
 \thispagestyle{empty}
 \baselineskip=12.875pt
 \vskip 30pt

\section{Introduction}
Let $\{F_n\}_{n\geq0}$ be the Fibonacci sequence, i.e.,
$$F_0=0,\quad F_1=1,\quad F_{n+1}=F_n+F_{n-1}\mbox{ for }n\geq1.$$
For example, $F_2=1,F_3=2,F_4=3,F_5=5$, etc. It is well-known that
$$p\mid F_{p-\left(\frac{p}{5}\right)},$$
where $p$ is an arbitrary prime, and $\left(\frac{p}{5}\right)$ is the Legendre symbol. We know that
$$\left(\frac{p}{5}\right)=\left\{\begin{array}{rl}
0,&\mbox{ if }p=5,\\
1,&\mbox{ if }p\equiv\pm1\pmod5,\\
-1,&\mbox{ if }p\equiv\pm2\pmod5.
\end{array}
\right.
$$
For example, we have that $2\mid F_3,3\mid F_4$ and $5\mid F_5$. In 1960 Wall \cite{wa} posed the problem of whether there exists a prime $p$ such that
$$p^2\mid F_{p-\left(\frac{p}{5}\right)}.$$
Up to now this is still open.

An idea related to Wall's problem is to consider the Fibonacci quotient
$$\frac{F_{p-\left(\frac{p}{5}\right)}}{p}.$$
In 1982 Williams \cite{wi} obtained this quotient as
$$\frac{F_{p-\left(\frac{p}{5}\right)}}{p}\equiv\frac{2}{5}\sum_{k=1}^{p-1-\left[\frac{p}{5}\right]}\frac{(-1)^k}{k}\pmod p,$$
where $p\neq5$ is an odd prime, and $\left[\frac{p}{5}\right]$ is the integral part of $\frac{p}{5}$, i.e., the largest integer $\leq\frac{p}{5}$.

We know that the Fibonacci sequence is a special Lucas sequence. In general, let $A,B\in\mathbb{Z}$, the Lucas sequence $\{u_n\}_{n\geq0}$ is defined as
$$u_0=0,u_1=1,u_{n+1}=Bu_{n}-Au_{n-1}{\mbox{ for }}n\geq1.$$
Thus, when $A=-1$ and $B=1$, we get the Fibonacci sequence. Let $D=B^2-4A$ and let $p\nmid A$ be an odd prime. It is well-known that
$$p\mid u_{p-\left(\frac{D}{p}\right)}.$$
So, similarly, we can consider the Lucas quotient
$$\frac{u_{p-\left(\frac{D}{p}\right)}}{p}\pmod p,$$
and we hope that we can obtain some expression as Williams' for Fibonacci quotient.

Williams' method is to consider the sum
$$\sum_{k\equiv r(\mbox{mod }
5)}{p\choose k},$$
where $r$ is an integer and ${p\choose k}$ is the binomial coefficient with the convention ${p\choose k}=0$ for $k<0$ or $k>p$. Williams did not give any explicit formula for this sum, but he used the properties of the sum to deduce his congruence. Along this line, Sun \cite{sun1,sun2,sun3}, Sun \cite{s1995,s2002} and Sun and Sun \cite{ss} studied the sum
$$\sum_{k\equiv r(\mbox{mod }m)}{n\choose k},$$
where $n,m$ and $r$ are integers with $n>0$ and $m>0$. They gave the formulae of the value of the sum for small $m$ and obtained congruences for two new Lucas sequences. One is the Pell sequence $\{P_n\}_{n\geq0}$ which is defined as
$$P_0=0,P_1=1,P_{n+1}=2P_n+P_{n-1}\mbox{ for }n\geq1.$$
Pell sequence is the Lucas sequence with $A=-1$ and $B=2$. Sun's congruence is
$$\frac{P_{p-\left(\frac{2}{p}\right)}}{p}\equiv(-1)^{\frac{p-1}{2}}\sum_{k=1}^{\left[\frac{p+1}{4}\right]}\frac{(-1)^k}{2k-1}\pmod p,$$
where $p$ is an odd prime, see (\cite{sun2} Theorem 2.5). Sun obtained this congruence by studying the above sum with $m=8$. Sun \cite{s1995} also studied the above sum with $m=8$ to deduce a congruence for primes. Sun and Sun \cite{ss} obtained a new congruence for the Fibonacci quotient by studying the above sum with $m=10$.

The second new Lucas sequence is the sequence $\{S_n\}_{n\geq0}$ which is defined as
$$ S_0=0,S_1=1,S_{n+1}=4S_n-S_{n-1}\mbox{ for }n\geq1.$$
This sequence is the Lucas sequence with $A=1$ and $B=4$. Sun \cite{s2002} obtained
$$\sum_{k=1}^{\frac{p-1}{2}}\frac{3^k}{k}\equiv\sum_{k=1}^{\left[\frac{p}{6}\right]}\frac{(-1)^k}{k}\equiv
-6\left(\frac{2}{p}\right)\frac{S_{\overline{p}}}{p}-q_p(2)\pmod p,
$$
where $p>3$ is a prime, $\overline{p}=\frac{p-\left(\frac{3}{p}\right)}{2}$ and $q_p(2)=\frac{2^{p-1}-1}{p}$ is the Fermat quotient of 2 with respect to $p$. See (\cite{s2002} Theorem 3).
Sun obtained this congruence by studying the above sum with $m=12$.

So far, except the above mentioned three Lucas sequences, there is no any known congruence for new Lucas quotients. Noticed that, we do not consider the case where
$D=B^2-4A$ is a perfect square. If $D=B^2-4A$ is a perfect square, then Lucas quotients degenerate to Fermat quotients. In this paper, we study the more general sum
\begin{equation}\label{generalsum}
\sum_{k\equiv r(\mbox{mod }m)}{n\choose k}a^k.
\end{equation}
When $a=1$, this sum is that considered by Williams, Sun and Sun. By studying this sum, we obtain new congruences for Lucas quotients of two infinite families of Lucas sequences, see Theorems \ref{3lucas} and \ref{4lucas}.

Another motivation of this paper is the result in \cite{dp}. Deng and Pan \cite{dp} connected this combinatorial sum with integer factorization for the first time and they proved that, when $n$ is a composite number, for every integer $a$ with gcd$(n,a)=1$, there exists a pair $(m,r)$ of integers such that the sum (\ref{generalsum}) has a nontrivial greatest common divisor with $n$.

Integer factorization is a famous and very important computational problem, and it is the security foundation of the famous public-key cryptosystem RSA \cite{rsa}. So it is worthwhile to make a systematic research of the combinatorial sum for a general $a$.

The paper is organized as follows. We give some necessary preliminaries
in Section 2. We deduce the recurrence relation for the combinatorial sum in Section 3. We consider the calculation of the combinatorial sum and give some applications when $m=3,4,6$ in Sections 4,5,6, respectively.

\section{Preliminaries}
Notational conventions: We denote by $\mathbb{C},\mathbb{R},\mathbb{Z}$ respectively the complex numbers, the reals and the integers. For $x\in\mathbb{R}$, we denote by $[x]$ the integral part of $x$, i.e., the largest integer $\leq x$.

First we recall some well-known facts about Lucas sequences. Let $A,B\in\mathbb{Z}$. We define Lucas sequences $\{u_n\}_{n\geq0}$ and $\{v_n\}_{n\geq0}$ as
$$u_0=0,u_1=1,u_{n+1}=Bu_{n}-Au_{n-1}{\mbox{ for }}n\geq1;$$
$$v_0=2,v_1=B,v_{n+1}=Bv_{n}-Av_{n-1}{\mbox{ for }}n\geq1.$$

The proof of the following two lemmas can be found in \cite{cp}.
\begin{lemma}\label{lucasbasic}
Let $D=B^2-4A$ and $\alpha,\beta$ be the two complex roots of $x^2-Bx+A=0$. Then we have
\begin{align*}
 u_n&=\frac{\alpha^n-\beta^n}{\alpha-\beta},\;\;\;\;v_n=\alpha^n+\beta^n;\\
  v_n&=u_{n+1}-Au_{n-1}=Bu_n-2Au_{n-1}=2u_{n+1}-Bu_n;\\
 Du_n&=v_{n+1}-Av_{n-1}=Bv_n-2Av_{n-1}=2v_{n+1}-Bv_n;\\
 u_{2n}&=u_nv_n,\;\;\;\;u_{2n+1}=u_{n+1}^2-Au_{n}^2;\\
 v_{2n}&=v_n^2-2A^n,\;\;\;\;v_{n}^2-Du_n^2=4A^n.
\end{align*}
\end{lemma}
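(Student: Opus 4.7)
The plan is to begin by establishing the two Binet-style closed forms for $u_n$ and $v_n$, and then derive every remaining identity by direct symbolic manipulation, using the Vieta relations $\alpha+\beta=B$, $\alpha\beta=A$, and $(\alpha-\beta)^2=D$.

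For the Binet formulas I would argue by induction on $n$. The base cases $n=0,1$ are immediate from the definitions together with Vieta. For the inductive step, since $\alpha$ and $\beta$ both satisfy $x^2=Bx-A$, the sequences $\frac{\alpha^n-\beta^n}{\alpha-\beta}$ and $\alpha^n+\beta^n$ automatically satisfy the same linear recurrence $x_{n+1}=Bx_n-Ax_{n-1}$ as $u_n$ and $v_n$; agreement of initial conditions together with agreement of the recurrence forces the sequences to coincide for all $n$.

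Once the closed forms are available, each of the remaining identities becomes a short algebraic check. For the second row I would expand $u_{n+1}-Au_{n-1}$ and use $A=\alpha\beta$ to rewrite $\alpha^2-A=\alpha(\alpha-\beta)$ and $\beta^2-A=-\beta(\alpha-\beta)$; the terms then collapse to $\alpha^n+\beta^n=v_n$. The alternative expressions $v_n=Bu_n-2Au_{n-1}=2u_{n+1}-Bu_n$, and the three dual expressions for $Du_n$ in the third row, follow by the same pattern after substituting $B=\alpha+\beta$ and, for the third row, multiplying through by $(\alpha-\beta)^2=D$. The doubling identities of the last two rows are quicker still: one has $u_nv_n=\frac{(\alpha^n-\beta^n)(\alpha^n+\beta^n)}{\alpha-\beta}=u_{2n}$, expanding $(\alpha^n\pm\beta^n)^2$ with $\alpha^n\beta^n=A^n$ yields both $v_{2n}=v_n^2-2A^n$ and $v_n^2-Du_n^2=4A^n$, and $u_{n+1}^2-Au_n^2=u_{2n+1}$ drops out after clearing denominators and repeating the trick $\alpha^2-A=\alpha(\alpha-\beta)$.

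There is no serious obstacle here; the only point that requires a moment of care is the degenerate case $D=0$, where $\alpha=\beta$ and the quotient $\frac{\alpha^n-\beta^n}{\alpha-\beta}$ must be read as its polynomial extension $\sum_{j=0}^{n-1}\alpha^j\beta^{n-1-j}$. All of the identities above are in fact polynomial identities in $\alpha,\beta$ modulo the symmetric relations $\alpha+\beta=B$ and $\alpha\beta=A$, so they remain valid in that case as well.
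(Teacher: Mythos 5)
Your proposal is correct and complete. Note that the paper itself offers no proof of this lemma: it simply states that the proof ``can be found in \cite{cp}'' (Crandall and Pomerance), so there is no argument in the paper to compare against line by line. What you supply is the standard self-contained derivation that such a reference would give --- establish the Binet forms $u_n=\frac{\alpha^n-\beta^n}{\alpha-\beta}$ and $v_n=\alpha^n+\beta^n$ by checking initial conditions and observing that both closed forms satisfy the recurrence $x_{n+1}=Bx_n-Ax_{n-1}$ because $\alpha,\beta$ are roots of $x^2-Bx+A$, then reduce every remaining identity to a symmetric-function computation via $\alpha+\beta=B$, $\alpha\beta=A$, $(\alpha-\beta)^2=D$. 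All of your individual verifications check out (in particular the key simplifications $\alpha^2-A=\alpha(\alpha-\beta)$ and $\beta^2-A=-\beta(\alpha-\beta)$ do collapse the second, third, and fourth rows as you claim), and your remark on the degenerate case $D=0$, where the quotient must be read as $\sum_{j=0}^{n-1}\alpha^j\beta^{n-1-j}$ and the identities hold as polynomial identities in $\alpha,\beta$, is a legitimate point of care that the paper glosses over entirely; it is harmless here since the paper's applications all assume $p\nmid D$, but it makes your version strictly more complete than a bare citation.
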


\begin{lemma}\label{lucascongru}
  Let $\varepsilon=\left(\frac{D}{p}\right)$ and $p\nmid A$ be an odd prime. Then we have
  $$u_{p-\varepsilon}\equiv0\pmod p,\quad u_p\equiv\varepsilon\pmod p.$$
\end{lemma}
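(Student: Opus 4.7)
The plan is to extract both congruences from Binet's formulas (Lemma~\ref{lucasbasic}) via a direct binomial-theorem computation modulo $p$. Writing $\alpha=(B+\sqrt{D})/2$ and $\beta=(B-\sqrt{D})/2$ and expanding $(B\pm\sqrt{D})^p$ by the binomial theorem, one observes that the odd powers of $\sqrt{D}$ cancel in $v_p=\alpha^p+\beta^p$ while the even powers cancel in $\alpha^p-\beta^p$, so that
$$2^{p-1}u_p=\sum_{j=0}^{(p-1)/2}\binom{p}{2j+1}B^{p-2j-1}D^j,\qquad 2^{p-1}v_p=\sum_{j=0}^{(p-1)/2}\binom{p}{2j}B^{p-2j}D^j,$$
and both sides are integers. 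Reducing mod $p$ and using $p\mid\binom{p}{k}$ for $1\le k\le p-1$ collapses each sum to a single term: the $j=(p-1)/2$ term in the first sum and the $j=0$ term in the second. This yields $2^{p-1}u_p\equiv D^{(p-1)/2}\pmod p$ and $2^{p-1}v_p\equiv B^p\pmod p$. Fermat's little theorem ($2^{p-1}\equiv 1$, $B^p\equiv B$) together with Euler's criterion ($D^{(p-1)/2}\equiv(D/p)\pmod p$, with the usual convention $(D/p)=0$ if $p\mid D$) then gives the two basic congruences
$$u_p\equiv\varepsilon\pmod p,\qquad v_p\equiv B\pmod p.$$

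The congruence $u_{p-\varepsilon}\equiv 0\pmod p$ follows by a short case analysis using the identities for $v_n$ from Lemma~\ref{lucasbasic}. If $\varepsilon=1$, substituting the two congruences above into $v_p=Bu_p-2Au_{p-1}$ produces $2Au_{p-1}\equiv B-B\equiv 0\pmod p$; since $p$ is odd and $p\nmid A$, this forces $u_{p-1}\equiv 0\pmod p$. If $\varepsilon=-1$, substituting into $v_p=2u_{p+1}-Bu_p$ gives $2u_{p+1}\equiv B-B\equiv 0\pmod p$, so $u_{p+1}\equiv 0\pmod p$. Finally, if $\varepsilon=0$, then $p\mid D$ and the already-proved $u_p\equiv 0\pmod p$ is precisely $u_{p-\varepsilon}\equiv 0\pmod p$.

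I do not expect any serious obstacle here: the computation of $u_p$ and $v_p$ modulo $p$ is a standard binomial-in-$\sqrt{D}$ argument, and the passage to $u_{p-\varepsilon}$ reduces to selecting the correct recurrence identity for each sign of $\varepsilon$. The one spot that deserves mild care is the degenerate case $p\mid D$, where Euler's criterion must be invoked with the convention that $(D/p)=0$ so that the identification $2^{p-1}u_p\equiv D^{(p-1)/2}\equiv\varepsilon\pmod p$ remains valid.
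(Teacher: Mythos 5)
Your proof is correct. Note that the paper does not actually prove this lemma---it simply cites Crandall and Pomerance \cite{cp}---so there is no in-paper argument to compare against; what you give is the standard self-contained derivation: the binomial expansion of $(B\pm\sqrt{D})^p$ yields $2^{p-1}u_p\equiv D^{(p-1)/2}\pmod p$ and $2^{p-1}v_p\equiv B^p\pmod p$, whence $u_p\equiv\varepsilon$ and $v_p\equiv B$, and the identities $v_p=Bu_p-2Au_{p-1}=2u_{p+1}-Bu_p$ from Lemma~\ref{lucasbasic} then force $u_{p-\varepsilon}\equiv0$ in each case. Your case analysis is complete (including the degenerate case $p\mid D$, where $\varepsilon=0$ and the claim reduces to $u_p\equiv0$), and the hypotheses $p$ odd and $p\nmid A$ are used exactly where needed to cancel the factors $2$ and $A$.
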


\begin{definition}\label{defsum}
{\rm Let $n,m,r$ and $a$ be integers with $n>0$ and $m>0$. We define
  $$\left[\begin{array}{c}n \\ r\\\end{array}\right] _{m}(a):=\sum_{\substack{k=0\\k\equiv r({\mbox{mod }}m)}}^n\binom nk a^k,$$
  where  ${n\choose k}$ is the binomial coefficient with the convention ${n\choose k}=0$ for $k<0$ or $k>n$.
  Let $p$ be an odd prime and $m,r$ and $a$ be integers with $m>0$. We define $$K_{p,m,r}(a):=\sum_{\substack{k=1\\k\equiv r({\mbox{mod }}m)}}^{p-1}\frac{(-a)^k}{k}.$$}
\end{definition}

\begin{lemma}\label{K}
   With notation as above, we have:\\
   (1) $$\sum_{r=0}^{m-1}\left[\begin{array}{c}n \\ r\\\end{array}\right] _{m}(a)=(1+a)^n;$$
  (2) $$\binom {p-1}k\equiv (-1)^k\pmod p \mbox{ for }0\leq k\leq p-1;   $$
  \noindent(3)
   $$ \left[\begin{array}{c}p \\ r\\\end{array}\right] _{m}(a)\equiv\delta_{0\equiv r(m)}+\delta_{p\equiv r(m)}a^p-pK_{p,m,r}(a) \pmod{p^2},$$
   where
   $$\delta_{0\equiv r(m)}=\left\{\begin{array}{ll}
   1,&\mbox{ if  }0\equiv r\pmod m  \mbox{ holds},\\
   0,&\mbox{ otherwise,}
   \end{array}
   \right.
   $$
   and $\delta_{p\equiv r(m)}$ has the similar meaning.
\end{lemma}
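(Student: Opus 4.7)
The plan is to handle the three assertions in turn. Parts (1) and (2) are essentially warm-up identities, and part (3) is the substantive congruence used later in the paper.

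For (1), I would observe that every index $k$ with $0\le k\le n$ lies in exactly one residue class modulo $m$, so summing over $r=0,\ldots,m-1$ partitions the full binomial expansion into the claimed pieces, and the total is $\sum_{k=0}^n\binom{n}{k}a^k=(1+a)^n$. For (2), I would write $\binom{p-1}{k}=\prod_{i=1}^k (p-i)/i$; reducing each factor $(p-i)/i\equiv -1\pmod p$ immediately gives $(-1)^k$.

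For part (3), I would split the defining sum of $\left[\begin{array}{c}p\\r\end{array}\right]_m(a)$ into three pieces: the boundary terms $k=0$ and $k=p$ and the middle block $1\le k\le p-1$. The boundary terms contribute $\binom{p}{0}a^0=1$ and $\binom{p}{p}a^p=a^p$, which appear only when $0\equiv r\pmod m$ or $p\equiv r\pmod m$ respectively; these account for the two $\delta$ terms. For the middle block, the key identity is $k\binom{p}{k}=p\binom{p-1}{k-1}$, which combined with part (2) gives $k\binom{p}{k}\equiv p(-1)^{k-1}\pmod{p^2}$. Since $\gcd(k,p)=1$ and $p\mid\binom{p}{k}$, I would write $\binom{p}{k}=p\,c_k$ with $c_k\in\mathbb{Z}$; the previous congruence becomes $kc_k\equiv(-1)^{k-1}\pmod p$, so $c_k\equiv(-1)^{k-1}/k\pmod p$ where $1/k$ denotes the mod-$p$ inverse, and multiplying back by $p$ yields the rational-valued congruence $\binom{p}{k}\equiv p(-1)^{k-1}/k\pmod{p^2}$.

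Multiplying by $a^k$ and rewriting $(-1)^{k-1}a^k=-(-a)^k$ converts this into $\binom{p}{k}a^k\equiv -p(-a)^k/k\pmod{p^2}$. Summing over those $k\in\{1,\ldots,p-1\}$ with $k\equiv r\pmod m$ produces exactly $-pK_{p,m,r}(a)$ by the definition of $K$, and adding the boundary contributions completes the proof. The only real subtlety is the bookkeeping needed to interpret $1/k$ as a $p$-adic unit so that the rational congruence makes sense mod $p^2$; once that is in place, everything reduces to the identity $k\binom{p}{k}=p\binom{p-1}{k-1}$ together with part (2).
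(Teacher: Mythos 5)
Your proposal is correct and follows essentially the same route as the paper: parts (1) and (2) are handled identically, and for part (3) the paper likewise isolates the $k=0$ and $k=p$ boundary terms and uses $\binom{p}{k}=\frac{p}{k}\binom{p-1}{k-1}$ together with part (2) to reduce the middle block to $-pK_{p,m,r}(a)$ modulo $p^2$. Your extra care in justifying the congruence with denominator $k$ (via $\binom{p}{k}=pc_k$ and the mod-$p$ inverse of $k$) is a minor refinement of a step the paper leaves implicit.
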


\begin{proof}
(1) is obvious.

  (2) If $k=0,\;\binom {p-1}0\equiv (-1)^0\pmod p$.

     If $1\leq k\leq p-1$, $\binom {p-1}k=\frac{(p-1)(p-2)\cdots (p-k)}{k!}\equiv(-1)^k\pmod p.$

     \noindent(3) By (2), we have
     \begin{align*}
       \left[\begin{array}{c}p \\ r\\\end{array}\right] _{m}(a)
       &=\delta_{0\equiv r(m)}+\delta_{p\equiv r(m)}a^p+\sum_{\substack{k=1\\k\equiv r(\mbox{mod }m)}}^{p-1}\binom{p}{k}a^k\\
       &=\delta_{0\equiv r(m)}+\delta_{p\equiv r(m)}a^p+\sum_{\substack{k=1\\k\equiv r(\mbox{mod }m)}}^{p-1}\frac{p}{k}\binom{p-1}{k-1}a^k\\
       &\equiv\delta_{0\equiv r(m)}+\delta_{p\equiv r(m)}a^p-p\sum_{\substack{k=1\\k\equiv r(\mbox{mod }m)}}^{p-1}\frac{(-a)^k}{k}\pmod {p^2}\\
       &=\delta_{0\equiv r(m)}+\delta_{p\equiv r(m)}a^p-pK_{p,m,r}(a)\pmod {p^2}.\\
     \end{align*}
     \end{proof}
     Note that, in the above lemma, (2) is well-known and (3) is a generalization of Lemma 1.1 in \cite{sun1}.
\begin{definition}
{\rm Let $p$ be an odd prime and $x$ an integer with $p\nmid x$.  Fermat quotient is defined as  $q_p(x):=\frac{x^{p-1}-1}{p}$. In the sequent, when we meet $q_p(x)$, we always suppose that $p$ is an odd prime and $p\nmid x$.}
\end{definition}

\begin{lemma}\label{fermatquo} We have:\\
  (1)$$q_p(x)\equiv 2\left(\frac{x}{p}\right)\frac{x^{\frac{p-1}2}-\left(\frac{x}{p}\right)}{p}\pmod p;$$
  (2) $$q_p(xy)\equiv q_p(x)+q_p(y)\pmod p;$$
  (3) $$\sum_{r=0}^{m-1}K_{p,m,r}(a)\equiv aq_p(a)-(a+1)q_p(a+1)\pmod p.$$
\end{lemma}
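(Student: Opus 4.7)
\medskip
\noindent\textbf{Proof proposal.}
For part (1), I would start from Euler's criterion, which (since $p\nmid x$) gives $x^{(p-1)/2}\equiv\varepsilon\pmod p$ for $\varepsilon=\left(\frac{x}{p}\right)\in\{\pm1\}$. Thus $t:=(x^{(p-1)/2}-\varepsilon)/p$ is an integer. Squaring the relation $x^{(p-1)/2}=\varepsilon+pt$ yields
$$x^{p-1}=1+2\varepsilon pt+p^2t^2,$$
so that $q_p(x)=2\varepsilon t+pt^2\equiv 2\varepsilon t\pmod p$, which is the asserted formula.

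For part (2), the identity $x^{p-1}=1+pq_p(x)$ (valid whenever $p\nmid x$) gives, on multiplying the corresponding identities for $x$ and for $y$,
$$(xy)^{p-1}=1+p\bigl(q_p(x)+q_p(y)\bigr)+p^2q_p(x)q_p(y),$$
and dividing by $p$ and reducing mod $p$ yields the multiplicativity.

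For part (3), the key idea is to sum the congruence of Lemma~\ref{K}(3) over $r=0,1,\dots,m-1$. By Lemma~\ref{K}(1) the left-hand side collapses to $(1+a)^p$. On the right-hand side, exactly one residue class contains $0$ and exactly one contains $p$, so the $\delta$-contributions amount to $1+a^p$. Assuming $p\nmid a(a+1)$ (which is forced by the very appearance of $q_p(a)$ and $q_p(a+1)$), I can then expand
$$(1+a)^p=(1+a)\bigl(1+pq_p(1+a)\bigr),\qquad a^p=a\bigl(1+pq_p(a)\bigr),$$
substitute, cancel the common $1+a$ on each side, and divide the remaining $p$-multiple of the identity by $p$. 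What survives mod $p$ is precisely
$$(1+a)q_p(1+a)\equiv aq_p(a)-\sum_{r=0}^{m-1}K_{p,m,r}(a)\pmod p,$$
which rearranges to the claimed congruence.

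No step presents a real obstacle; the only thing to watch is the bookkeeping for the $\delta$-terms in step (3) and the implicit assumption $p\nmid a(a+1)$ that justifies invoking Fermat quotients of $a$ and $a+1$.
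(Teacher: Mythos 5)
Your proposal is correct and follows essentially the same route as the paper: part (1) is the same computation (the paper factors $x^{p-1}-1$ as a difference of squares where you square $x^{(p-1)/2}=\varepsilon+pt$, which is equivalent), part (2) uses the same elementary identity, and part (3) sums Lemma \ref{K}(3) over $r$ against Lemma \ref{K}(1) exactly as the paper does. The hypothesis $p\nmid a(a+1)$ you flag is already built into the paper's standing convention for $q_p(x)$.
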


\begin{proof}
 (1) From $$x^{p-1}-1=\left(x^{\frac{p-1}2}+\left(\frac{x}{p}\right)\right)\left(x^{\frac{p-1}2}-\left(\frac{x}{p}\right)\right)\equiv 2\left(\frac{x}{p} \right)\left(x^{\frac{p-1}2}-\left(\frac{x}{p} \right)\right)\pmod{p^2},$$
 we have
 $$q_p(x)\equiv 2\left(\frac{x}{p} \right)\frac{x^{\frac{p-1}2}-\left(\frac{x}{p} \right)}{p}\pmod p.$$
 (2) From $$(xy)^{p-1}-1=x^{p-1}(y^{p-1}-1)+(x^{p-1}-1),$$ we have$$q_p(xy)\equiv q_p(x)+q_p(y)\pmod p.$$
 (3) From Lemma \ref{K}, we have
  \begin{align*}
 (1+a)^p&=\sum_{r=0}^{m-1}\left[\begin{array}{c}p \\ r\\\end{array}\right] _{m}(a)\\
 &\equiv\sum_{r=0}^{m-1}\left[\delta_{0\equiv r(m)}+\delta_{p\equiv r(m)}a^p-pK_{p,m,r}(a)\right] \pmod{p^2}\\
 &\equiv1+a^p-p\sum_{r=0}^{m-1}K_{p,m,r}(a)\pmod{p^2}.
  \end{align*}
 Thus we obtain
 $$\sum_{r=0}^{m-1}K_{p,m,r}(a)\equiv\frac{1+a^p-(a+1)^p}{p}=aq_p(a)-(a+1)q_p(a+1)\pmod p.$$
\end{proof}
Note that, in the above lemma, (1) is the Lemma 1.2 in \cite{sun1}.

\begin{lemma}\label{quotient-v}
With notation as in Lemmas \ref{lucasbasic} and \ref{lucascongru}. Let $p$ be an odd prime with $p\nmid DA$. We have:
\begin{description}
  \item[(1)] if $\varepsilon=1$, then $\frac{v_{p-1}-2}{p}\equiv q_p(A)\pmod p$;
 \item[(2)] if $\varepsilon=-1$, then $\frac{v_{p+1}-2A}{p}\equiv Aq_p(A)\pmod p$.
\end{description}
 \end{lemma}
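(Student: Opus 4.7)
The plan is to derive both statements from the identity $v_n^2 - Du_n^2 = 4A^n$ in Lemma \ref{lucasbasic}, upgrading it to a congruence modulo $p^2$ by exploiting the divisibility $p \mid u_{p-\varepsilon}$ from Lemma \ref{lucascongru}.

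For part (1), setting $n = p-1$ gives $v_{p-1}^2 = 4A^{p-1} + Du_{p-1}^2$. Since $p \mid u_{p-1}$, the term $Du_{p-1}^2$ vanishes modulo $p^2$, so $v_{p-1}^2 \equiv 4A^{p-1} \pmod{p^2}$. To pin down $v_{p-1}$ itself modulo $p$, I apply $v_n = 2u_{n+1} - Bu_n$ at $n = p-1$ together with Lemma \ref{lucascongru} to get $v_{p-1} \equiv 2u_p - Bu_{p-1} \equiv 2 \pmod p$. Writing $v_{p-1} = 2 + pt$ and squaring turns the congruence into $4 + 4pt \equiv 4A^{p-1} \pmod{p^2}$, i.e., $A^{p-1} \equiv 1 + pt \pmod{p^2}$. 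Since $A^{p-1} = 1 + p\,q_p(A)$ by the definition of the Fermat quotient, this forces $t \equiv q_p(A) \pmod p$, which is the claim.

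Part (2) runs in complete parallel with $n = p+1$. The vanishing of $u_{p+1}$ modulo $p$ again upgrades $v_{p+1}^2 - Du_{p+1}^2 = 4A^{p+1}$ to $v_{p+1}^2 \equiv 4A^{p+1} \pmod{p^2}$, and using $v_{p+1} = Bu_{p+1} - 2Au_p$ with $u_p \equiv -1 \pmod p$ yields $v_{p+1} \equiv 2A \pmod p$. Setting $v_{p+1} = 2A + ps$ and expanding the square gives $4A^2 + 4Aps \equiv 4A^{p+1} \pmod{p^2}$, hence $4Aps \equiv 4A^2(A^{p-1} - 1) = 4A^2 \cdot p\,q_p(A) \pmod{p^2}$; dividing by $4Ap$, which is legitimate because $p$ is odd and $p \nmid A$, gives $s \equiv A\,q_p(A) \pmod p$.

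There is no real obstacle here. The only moderately delicate point is to make sure every division in the final cancellation is legal: the factor $4$ is a unit mod $p$ since $p$ is odd, the factor $A$ is a unit by the assumption $p \nmid DA$, and one factor of $p$ is absorbed into the congruence modulo $p$ on the right-hand side. The role of $p \nmid D$ is already implicit via $\varepsilon = (D/p) \in \{\pm 1\}$ in Lemma \ref{lucascongru}.
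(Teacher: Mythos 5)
Your proof is correct. It starts from the same two ingredients as the paper — the identity $v_n^2-Du_n^2=4A^n$ from Lemma \ref{lucasbasic} and the divisibility $p\mid u_{p-\varepsilon}$ from Lemma \ref{lucascongru}, yielding $v_{p-\varepsilon}^2\equiv 4A^{p-\varepsilon}\pmod{p^2}$, and the same evaluations $v_{p-1}\equiv 2$, $v_{p+1}\equiv 2A\pmod p$ — but your endgame is different. The paper extracts a square root modulo $p^2$ (arguing that $p$ cannot divide both $v_{p-\varepsilon}\pm 2A^{(p-\varepsilon)/2}$, so $v_{p-\varepsilon}\equiv\pm 2\left(\frac{A}{p}\right)A^{(p-\varepsilon)/2}\pmod{p^2}$) and then must invoke Lemma \ref{fermatquo}(1) to convert the half-power $A^{(p-1)/2}$ into the Fermat quotient. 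You instead write $v_{p-1}=2+pt$ (resp.\ $v_{p+1}=2A+ps$), square, and cancel; the $p^2t^2$ term dies modulo $p^2$ and only the definition $A^{p-1}=1+pq_p(A)$ is needed. Your route is slightly more elementary — it bypasses both the sign discussion and Lemma \ref{fermatquo}(1) — at the cost of nothing; the paper's route has the mild advantage of producing the intermediate statement $v_{p-\varepsilon}\equiv 2\left(\frac{A}{p}\right)A^{(p-\varepsilon)/2}\pmod{p^2}$, which is of independent interest. All your divisions are legitimate for the reasons you state.
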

\begin{proof}
  By Lemma \ref{lucasbasic}, we have $v_n^2-Du_n^2=4A^n$ for $n\geq0$. Combining this with Lemma \ref{lucascongru}, we have $v_{p-\varepsilon}^2\equiv4A^{p-\varepsilon}\pmod{p^2}$. Since it is impossible that $p\mid v_{p-\varepsilon}+2A^{\frac{p-\varepsilon}{2}}$
  and $p\mid v_{p-\varepsilon}-2A^{\frac{p-\varepsilon}{2}}$, we have
  \[v_{p-\varepsilon}\equiv\pm2\left(\frac{A}{p}\right)A^{\frac{p-\varepsilon}{2}}\pmod{p^2}.\]
  If $\varepsilon=1$, by Lemmas \ref{lucasbasic} and \ref{lucascongru} we have $v_{p-1}=2u_p-Bu_{p-1}\equiv2\pmod p$. Thus, by Lemma \ref{fermatquo}(1), we have
  $$v_{p-1}\equiv2\left(\frac{A}{p}\right)A^{\frac{p-1}{2}}\equiv2+pq_p(A)\pmod{p^2}.$$
  Hence
   $$\frac{v_{p-1}-2}{p}\equiv q_p(A)\pmod p,$$
   we obtain (1).

  If $\varepsilon=-1$, by Lemmas \ref{lucasbasic} and \ref{lucascongru} we have $v_{p+1}=Bu_{p+1}-2Au_{p}\equiv2A\pmod p$. Thus, by Lemma \ref{fermatquo}(1), we have
  $$v_{p+1}\equiv2\left(\frac{A}{p}\right)A^{\frac{p+1}{2}}\equiv2A+Apq_p(A)\pmod{p^2}.$$
  Hence
   $$\frac{v_{p+1}-2A}{p}\equiv Aq_p(A)\pmod p,$$
   we obtain (2).
\end{proof}
\section{The Recurrence Relation for $\Delta_m(r,n)$}
In this section, we consider the calculation of the sum $\left[\begin{array}{c}n \\ r\\\end{array}\right] _{m}(a)$.
It is easy to see that
  $$\left[\begin{array}{c}n \\ 0\\\end{array}\right] _{1}(a)=(1+a)^n,\quad\left[\begin{array}{c}n \\ r\\\end{array}\right] _{2}(a)=\frac{(1+a)^n+(-1)^r(1-a)^n}{2}.$$
  However, for $m\geq3$, the calculation is much more difficult.

Throughout the rest of this paper, we fix $a\neq0,\pm1$. For any positive integer $m$, let $\zeta_m=e^{2\pi i/m}\in\mathbb{C}$ be the primitive
$m$-th root of unity.

The following lemma is useful to compute $\left[\begin{array}{c}n\\r\end{array}\right]_{m}(a)$ when $m$ is small.

 \begin{lemma}\label{basicsum}
 We have $$\left[\begin{array}{c}n \\ r\\\end{array}\right] _{m}(a)=\frac 1m\sum\limits_{l=0}^{m-1}\zeta_m^{-rl}(1+a\zeta_m^l)^n.$$
 \end{lemma}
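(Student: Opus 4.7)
The plan is to apply the standard roots-of-unity filter. The key identity is
\[
\frac{1}{m}\sum_{l=0}^{m-1}\zeta_m^{l(k-r)} = \begin{cases} 1, & k\equiv r\pmod{m},\\ 0, & \text{otherwise,}\end{cases}
\]
which follows from summing a geometric series: if $k\equiv r\pmod m$ every term is $1$, while otherwise the sum equals $(\zeta_m^{m(k-r)}-1)/(\zeta_m^{k-r}-1)=0$ since $\zeta_m^{m(k-r)}=1$ and $\zeta_m^{k-r}\neq 1$.

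With this in hand, I would start from the right-hand side, expand each $(1+a\zeta_m^l)^n$ by the binomial theorem, and swap the order of summation:
\[
\frac{1}{m}\sum_{l=0}^{m-1}\zeta_m^{-rl}(1+a\zeta_m^l)^n
= \frac{1}{m}\sum_{l=0}^{m-1}\zeta_m^{-rl}\sum_{k=0}^{n}\binom{n}{k}a^k\zeta_m^{lk}
= \sum_{k=0}^{n}\binom{n}{k}a^k\left(\frac{1}{m}\sum_{l=0}^{m-1}\zeta_m^{l(k-r)}\right).
\]
By the filter identity, the inner parenthesized sum kills every term except those with $k\equiv r\pmod m$, leaving exactly
\[
\sum_{\substack{k=0\\k\equiv r(\bmod\,m)}}^{n}\binom{n}{k}a^k = \left[\begin{array}{c}n\\r\end{array}\right]_m(a),
\]
which is the definition on the left.

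There is essentially no obstacle here; the only point to verify carefully is the root-of-unity orthogonality relation, and that is routine. The statement is a purely formal consequence of the binomial theorem together with the fact that $\zeta_m$ is a primitive $m$-th root of unity, so the argument is complete once these two ingredients are combined.
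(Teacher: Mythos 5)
Your proposal is correct and uses the same roots-of-unity filter combined with the binomial theorem as the paper's proof; the only difference is that you expand the right-hand side while the paper inserts the filter into the left-hand side, which is the identical computation read in reverse.
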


   \begin{proof} Since
     \begin{align*}
      \left[\begin{array}{c}n \\ r\\\end{array}\right] _{m}(a)
      &=\sum\limits_{k=0}^n\binom nk a^k\cdot\frac 1m\sum\limits_{l=0}^{m-1}\zeta_m^{(k-r)l}\\
      &=\frac 1m\sum\limits_{l=0}^{m-1}\zeta_m^{-rl}\sum\limits_{k=0}^n\binom nk(a\zeta_m^l)^k\\
      &=\frac 1m\sum\limits_{l=0}^{m-1}\zeta_m^{-rl}(1+a\zeta_m^l)^n,
     \end{align*}
     the lemma follows.
   \end{proof}

   \begin{proposition}\label{polyG}
Let $G_n(x)=\prod\limits_{l=1}^{n}(x-1-a\zeta_{2n+1}^l)(x-1-a\zeta_{2n+1}^{-l}):=\sum\limits_{s=0}^{2n}b_sx^s$ for $n\geq0$.
Then $G_n(x)\in\mathbb{Z}[x]$ and we have:
\begin{description}
  \item[(1)] $b_0=\frac{a^{2n+1}+1}{a+1},b_{2n}=1$ and $b_{s-1}-(a+1)b_s=\binom{2n+1}{s}(-1)^{s+1}$ for $1\leq s\leq 2n-1;$
  \item[(2)] $G_0(x)=1$ and $G_{n+1}(x)=(x-1)^2G_{n}(x)+a^{2n+1}(x+a-1)$ for $n\geq0.$
\end{description}
\end{proposition}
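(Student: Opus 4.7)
The plan is to obtain a closed form for $G_n(x)$ and then read off all the claims from it. The starting observation is that the two factors $(x-1-a\zeta_{2n+1}^l)$ and $(x-1-a\zeta_{2n+1}^{-l})$ in the product, as $l$ runs through $1,\dots,n$, are exactly the $2n$ factors $(x-1-a\zeta_{2n+1}^l)$ for $l=1,\dots,2n$, since $\zeta_{2n+1}^{-l}=\zeta_{2n+1}^{2n+1-l}$. Thus
\[
G_n(x)=\prod_{l=1}^{2n}\bigl(x-1-a\zeta_{2n+1}^l\bigr).
\]

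Next I would use the factorization $y^{2n+1}-a^{2n+1}=\prod_{l=0}^{2n}(y-a\zeta_{2n+1}^l)$. Substituting $y=x-1$ yields
\[
(x-1)^{2n+1}-a^{2n+1}=\prod_{l=0}^{2n}\bigl((x-1)-a\zeta_{2n+1}^l\bigr)=\bigl(x-1-a\bigr)\,G_n(x).
\]
Dividing out the $l=0$ factor gives the key closed form
\[
G_n(x)=\frac{(x-1)^{2n+1}-a^{2n+1}}{(x-1)-a}=\sum_{k=0}^{2n}(x-1)^k a^{2n-k},
\]
which makes it manifest that $G_n(x)\in\mathbb{Z}[x]$.

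From here (1) is almost immediate. The formula $b_{2n}=1$ comes from the $k=2n$ summand $(x-1)^{2n}$; the value $b_0=G_n(0)=\frac{(-1)^{2n+1}-a^{2n+1}}{-1-a}=\frac{1+a^{2n+1}}{1+a}$ follows by plugging $x=0$ into the closed form. To get the three-term recursion, I would expand the identity $G_n(x)\bigl(x-(1+a)\bigr)=(x-1)^{2n+1}-a^{2n+1}$ and equate coefficients of $x^s$: the left side gives $b_{s-1}-(a+1)b_s$ while the right side gives $\binom{2n+1}{s}(-1)^{2n+1-s}=\binom{2n+1}{s}(-1)^{s+1}$. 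For (2), the base case $G_0(x)=1$ is the empty product, and the recurrence is obtained by computing
\[
G_{n+1}(x)-(x-1)^2 G_n(x)=\frac{(x-1)^{2n+3}-a^{2n+3}-(x-1)^2\bigl((x-1)^{2n+1}-a^{2n+1}\bigr)}{(x-1)-a},
\]
which simplifies to $\dfrac{a^{2n+1}\bigl((x-1)^2-a^2\bigr)}{(x-1)-a}=a^{2n+1}(x+a-1)$ after cancelling $(x-1)^{2n+3}$ and factoring the difference of squares.

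There is no serious obstacle here; the only genuine step is noticing that the product $G_n(x)$ is essentially the quotient of $(x-1)^{2n+1}-a^{2n+1}$ by the missing linear factor corresponding to $l=0$. Once that closed form is in hand, parts (1) and (2) are routine coefficient-matching and algebraic manipulation respectively.
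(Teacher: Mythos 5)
Your proposal is correct and follows essentially the same route as the paper: everything rests on the identity $(x-1-a)G_n(x)=(x-1)^{2n+1}-a^{2n+1}$, with (1) obtained by comparing coefficients and (2) by the same algebraic manipulation of that identity (you phrase it as a division, the paper as a multiplication). The only cosmetic differences are that you spell out why the identity holds via the root factorization and extract the closed form $\sum_{k=0}^{2n}(x-1)^k a^{2n-k}$ to see integrality directly, whereas the paper deduces $G_n(x)\in\mathbb{Z}[x]$ from the recurrence in (2).
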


\begin{proof}
Since $(x-1-a)G_n(x)=(x-1)^{2n+1}-a^{2n+1}$, we have
$$\sum_{s=1}^{2n+1}b_{s-1}x^s-\sum_{s=0}^{2n}(1+a)b_sx^s=x^{2n+1}+\sum_{s=1}^{2n}{2n+1\choose s}(-1)^{s+1}x^s-1-a^{2n+1}.$$
By comparing coefficients of both sides of the above expression, we obtain (1).

   Since $(x-1-a)G_n(x)=(x-1)^{2n+1}-a^{2n+1},$ then
   $(x-1-a)G_{n+1}(x)+a^{2n+3}=(x-1)^{2n+3}=(x-1)^2\left[(x-1-a)G_n(x)+a^{2n+1}\right].$
   Hence $G_{n+1}(x)=(x-1)^2G_n(x)+a^{2n+1}(x+a-1)$, thus we obtain (2). The assertion $G_n(x)\in\mathbb{Z}[x]$ follows from (2).
\end{proof}
The polynomial $G_n(x)$ depends on $a$ and should be denoted as $G_{n,a}(x)$, for the notational simplification, we omit $a$.
The first few values of $G_n(x)$ are: $G_0(x)=1,G_1(x)=x^2+(a-2)x+a^2-a+1,G_2(x)=x^4+(a-4)x^3+(a^2-3a+6)x^2+(a^3-2a^2+3a-4)x+a^4-a^3+a^2-a+1$.

\begin{proposition}\label{polyQ}
Let $Q_n(x)=\prod\limits_{l=1}^{n}(x-1-a\zeta_{2n+2}^l)(x-1-a\zeta_{2n+2}^{-l}):=\sum\limits_{s=0}^{2n}c_sx^s$ for $n\geq0$.
Then $Q_n(x)\in\mathbb{Z}[x]$ and we have:
\begin{description}
  \item[(1)] $c_0=\frac{a^{2n+2}-1}{a^2-1},2c_0+(a^2-1)c_1=2n+2,c_{2n-1}=-2n,c_{2n}=1$ and $c_{s-2}-2c_{s-1}+(1-a^2)c_s=\binom{2n+2}{s}(-1)^s$ for $2\leq s\leq 2n-2$;
  \item[(2)] $Q_0(x)=1$ and $Q_{n+1}(x)=(x-1)^2Q_n(x)+a^{2n+2}$ for $n\geq0$.
\end{description}
\end{proposition}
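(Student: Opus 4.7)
The plan is to establish a single master identity for $Q_n(x)$ and derive both parts of the proposition from it by, respectively, a short algebraic manipulation and a coefficient comparison. This parallels the proof of Proposition \ref{polyG}, with the extra wrinkle that the missing roots of unity now form two factors $(x-1-a)$ and $(x-1+a)$ rather than one.

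First I would note that the $(2n+2)$-th roots of unity are $\zeta_{2n+2}^l$ for $l=0,\dots,2n+1$, with $\zeta_{2n+2}^0=1$ and $\zeta_{2n+2}^{n+1}=-1$, and the remaining $2n$ roots pair into the conjugate pairs $\zeta_{2n+2}^l,\zeta_{2n+2}^{-l}$ for $l=1,\dots,n$. Substituting $y=(x-1)/a$ in $y^{2n+2}-1=\prod_{l=0}^{2n+1}(y-\zeta_{2n+2}^l)$ and clearing denominators yields the master identity
\[
\bigl((x-1)^2-a^2\bigr)\,Q_n(x)=(x-1)^{2n+2}-a^{2n+2},
\]
i.e.\ $(x-1-a)(x-1+a)Q_n(x)=(x-1)^{2n+2}-a^{2n+2}$.

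For (2), I would apply this identity at both $n$ and $n+1$: subtracting $(x-1)^2$ times the $n$-case from the $(n+1)$-case gives
\[
\bigl((x-1)^2-a^2\bigr)\bigl(Q_{n+1}(x)-(x-1)^2 Q_n(x)\bigr)=a^{2n+2}\bigl((x-1)^2-a^2\bigr),
\]
from which $Q_{n+1}(x)=(x-1)^2 Q_n(x)+a^{2n+2}$ follows by cancellation. Combined with $Q_0(x)=1$, an immediate induction using (2) gives $Q_n(x)\in\mathbb{Z}[x]$.

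For (1), I would expand the master identity as $(x^2-2x+(1-a^2))\sum_{s=0}^{2n}c_s x^s=(x-1)^{2n+2}-a^{2n+2}$ and compare coefficients of $x^s$. The left-hand coefficient of $x^s$ is $c_{s-2}-2c_{s-1}+(1-a^2)c_s$ (with $c_t=0$ for $t<0$ or $t>2n$), and on the right the coefficient of $x^s$ is $\binom{2n+2}{s}(-1)^{2n+2-s}=\binom{2n+2}{s}(-1)^s$ for $s\geq 1$, while for $s=0$ it is $1-a^{2n+2}$. The case $s=0$ yields $c_0=(a^{2n+2}-1)/(a^2-1)$; the case $s=1$ yields $-2c_0+(1-a^2)c_1=-(2n+2)$, which rearranges to $2c_0+(a^2-1)c_1=2n+2$; the cases $s=2n+1,2n+2$ yield $c_{2n-1}=-2n$ and $c_{2n}=1$; and the intermediate range $2\leq s\leq 2n-2$ gives the stated three-term recurrence. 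The main obstacle is purely bookkeeping — keeping track of the sign $(-1)^{2n+2-s}=(-1)^s$ and handling the four boundary indices $s=0,1,2n+1,2n+2$ separately — so I expect the proof to be short and essentially a transcription of these coefficient matchings.
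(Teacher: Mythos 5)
Your proposal is correct and follows essentially the same route as the paper: both rest on the master identity $(x-1-a)(x-1+a)Q_n(x)=(x-1)^{2n+2}-a^{2n+2}$, deriving (1) by coefficient comparison and (2) by applying the identity at $n$ and $n+1$ and cancelling, with integrality then following from (2) by induction. Your boundary cases $s=0,1,2n+1,2n+2$ all check out against the stated formulae.
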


\begin{proof}
Since $(x-1-a)(x-1+a)Q_n(x)=(x-1)^{2n+2}-a^{2n+2}$, we have
$$\sum_{s=2}^{2n+2}c_{s-2}x^s-\sum_{s=1}^{2n+1}2c_{s-1}x^s+\sum_{s=0}^{2n}(1-a^2)c_sx^s$$
$$=x^{2n+2}+\sum_{s=1}^{2n+1}{2n+2\choose s}(-1)^sx^s+1-a^{2n+2}.$$
By comparing coefficients of both sides of the above expression, we obtain (1).

Since $(x-1-a)(x-1+a)Q_n(x)=(x-1)^{2n+2}-a^{2n+2}$, then
   $(x-1-a)(x-1+a)Q_{n+1}(x)+a^{2n+4}=(x-1)^{2n+4}
   =(x-1)^2\left((x-1-a)(x-1+a)Q_{n}(x)+a^{2n+2}\right).$
   Hence $Q_{n+1}(x)=(x-1)^2Q_{n}(x)+a^{2n+2}$, thus we obtain (2). The assertion $Q_n(x)\in\mathbb{Z}[x]$ follows from (2).
\end{proof}

The first few values of $Q_n(x)$ are: $Q_0(x)=1,Q_1(x)=x^2-2x+1+a^2,Q_2(x)=x^4-4x^3+(a^2+6)x^2-(2a^2+4)x+a^4+a^2+1=(x^2+(a-2)x+a^2-a+1)(x^2-(a+2)x+a^2+a+1)$.

\begin{definition}
 {\rm We define
   $$\Delta_m(r,n):=\begin{cases}m\left[\begin{array}{c}n \\ r\\\end{array}\right] _{m}(a)-(1+a)^n,&\mbox{if }m\mbox{ is odd,}\\
  \\
  m\left[\begin{array}{c}n \\ r\\\end{array}\right] _{m}(a)-(1+a)^n-(-1)^{r}(1-a)^n,&\mbox{if }m \mbox{ is even.}\end{cases}$$}
  \end{definition}

  \begin{remark}
  It is obvious that
  $$\Delta_1(r,n)=\Delta_2(r,n)=0.$$
  By Lemma \ref{K}(1), it is easy to see that
  $$\sum_{r=0}^{m-1}\Delta_m(r,n)=0.$$
  \end{remark}

\begin{theorem}\label{oddm}
  Let $m$ be an odd positive integer, and let $G_{\frac{m-1}{2}}(x)=\sum\limits_{s=0}^{m-1}b_sx^s$ be the same as in Proposition \ref{polyG}.
   Then we have
  $\sum\limits_{s=0}^{m-1}b_s\Delta_m(r,n+s)=0$.
\end{theorem}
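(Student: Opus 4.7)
The plan is to use the explicit formula for $\left[\begin{array}{c}n \\ r\end{array}\right]_m(a)$ given by Lemma \ref{basicsum} to rewrite $\Delta_m(r,n)$ as a sum of exponentials, and then observe that the polynomial $G_{(m-1)/2}(x)$ was designed precisely so that the result vanishes.

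More precisely, since $m$ is odd, $(m-1)/2$ equals the $n$ in the definition of $G_n$ with $2n+1 = m$, so
\[G_{(m-1)/2}(x) = \prod_{l=1}^{(m-1)/2}(x-1-a\zeta_m^l)(x-1-a\zeta_m^{-l}).\]
As $l$ ranges over $\{1,2,\ldots,(m-1)/2\}$ and $\{-1,-2,\ldots,-(m-1)/2\}$ (mod $m$), the exponents cover all nonzero residues mod $m$, so the roots of $G_{(m-1)/2}(x)$ are exactly the numbers $1+a\zeta_m^l$ for $l=1,2,\ldots,m-1$. This is the key point.

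Next, I would apply Lemma \ref{basicsum} to obtain
\[\Delta_m(r,n) = m\left[\begin{array}{c}n \\ r\end{array}\right]_m(a) - (1+a)^n = \sum_{l=1}^{m-1}\zeta_m^{-rl}(1+a\zeta_m^l)^n,\]
where the $(1+a)^n$ term is exactly the $l=0$ contribution which gets subtracted off. Then I substitute this into the desired sum, interchange the order of summation, and factor out $(1+a\zeta_m^l)^n$:
\begin{align*}
\sum_{s=0}^{m-1} b_s \Delta_m(r,n+s)
&= \sum_{s=0}^{m-1} b_s \sum_{l=1}^{m-1}\zeta_m^{-rl}(1+a\zeta_m^l)^{n+s}\\
&= \sum_{l=1}^{m-1}\zeta_m^{-rl}(1+a\zeta_m^l)^n \sum_{s=0}^{m-1} b_s (1+a\zeta_m^l)^s\\
&= \sum_{l=1}^{m-1}\zeta_m^{-rl}(1+a\zeta_m^l)^n \, G_{(m-1)/2}(1+a\zeta_m^l).
\end{align*}
By the observation above, each factor $G_{(m-1)/2}(1+a\zeta_m^l)$ vanishes for $l=1,\ldots,m-1$, and so the whole sum is zero.

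There is no real obstacle here beyond bookkeeping; the theorem is essentially a recognition statement that the coefficients $b_s$ of $G_{(m-1)/2}(x)$ are precisely the coefficients that annihilate the exponentials $(1+a\zeta_m^l)^n$ appearing in $\Delta_m(r,n)$. The only thing to be careful about is verifying that $G_{(m-1)/2}$ indeed has the full set of roots $\{1+a\zeta_m^l : 1\le l \le m-1\}$ (i.e., that $\zeta_{2n+1}^l$ with $2n+1 = m$ matches $\zeta_m^l$), which is immediate from the definition.
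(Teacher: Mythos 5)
Your proposal is correct and follows essentially the same route as the paper: both apply Lemma \ref{basicsum} to express $\Delta_m(r,n)$ as $\sum_{l=1}^{m-1}\zeta_m^{-rl}(1+a\zeta_m^l)^n$, interchange the order of summation, and conclude because $G_{\frac{m-1}{2}}(1+a\zeta_m^l)=0$ for every $l\not\equiv0\pmod m$. The only cosmetic difference is that the paper pairs the terms $l$ and $m-l$ before summing, which changes nothing in the argument.
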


\begin{proof}
  By Lemma \ref{basicsum}, we have
  \begin{align*}
   \Delta_m(r,n)
   &=m\left[\begin{array}{c}n \\ r\\\end{array}\right] _{m}(a)-(1+a)^n\\
   &=\sum\limits_{l=1}^{m-1}\zeta_m^{-rl}(1+a\zeta_m^l)^n\\
   &=\sum\limits_{l=1}^{\frac{m-1}{2}}\left[\zeta_m^{-rl}(1+a\zeta_m^l)^n+\zeta_m^{rl}(1+a\zeta_m^{-l})^n\right].
  \end{align*}
  Thus
  $$\begin{array}{l}
    \sum\limits_{s=0}^{m-1}b_s\Delta_m(r,n+s)\\
    =\sum\limits_{s=0}^{m-1}b_s\sum\limits_{l=1}^{\frac{m-1}{2}}\left[\zeta_m^{-rl}(1+a\zeta_m^l)^{n+s}+\zeta_m^{rl}(1+a\zeta_m^{-l})^{n+s}\right]\\
    =\sum\limits_{l=1}^{\frac{m-1}{2}}\zeta_m^{-rl}(1+a\zeta_m^{l})^n\sum\limits_{s=0}^{m-1}b_s(1+a\zeta_m^l)^s+
    \sum\limits_{l=1}^{\frac{m-1}{2}}\zeta_m^{rl}(1+a\zeta_m^{-l})^n\sum\limits_{s=0}^{m-1}b_s(1+a\zeta_m^{-l})^s\\
    =\sum\limits_{l=1}^{\frac{m-1}{2}}\zeta_m^{-rl}(1+a\zeta_m^{l})^nG_{\frac{m-1}{2}}(1+a\zeta_m^l)
    +\sum\limits_{l=1}^{\frac{m-1}{2}}\zeta_m^{rl}(1+a\zeta_m^{-l})^nG_{\frac{m-1}{2}}(1+a\zeta_m^{-l})\\
    =0+0\\
    =0.
  \end{array}$$
\end{proof}

\begin{theorem}\label{evenm}
  Let $m$ be an even positive integer, and let
  $Q_{\frac{m}{2}-1}(x)=\sum\limits_{s=0}^{m-2}c_sx^s$ be the same as in Proposition \ref{polyQ}. Then we have
  $\sum\limits_{s=0}^{m-2}c_s\Delta_m(r,n+s)=0$.
\end{theorem}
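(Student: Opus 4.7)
The plan is to follow the same strategy as in the proof of Theorem \ref{oddm}, with $Q_{m/2-1}$ playing the role of $G_{(m-1)/2}$ and one additional root of unity being separated out on account of $m$ being even.

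First, I would apply Lemma \ref{basicsum} to write
$$m\left[\begin{array}{c}n\\r\\\end{array}\right]_m(a) = \sum_{l=0}^{m-1}\zeta_m^{-rl}(1+a\zeta_m^l)^n.$$
For even $m$, the contributions from $l=0$ and $l=m/2$ (using $\zeta_m^{m/2}=-1$) are $(1+a)^n$ and $(-1)^r(1-a)^n$, respectively; these are precisely the terms subtracted off in the definition of $\Delta_m(r,n)$ for even $m$. Pairing $l$ with $m-l$ and using $\zeta_m^{m-l}=\zeta_m^{-l}$ then gives
$$\Delta_m(r,n)=\sum_{l=1}^{m/2-1}\left[\zeta_m^{-rl}(1+a\zeta_m^l)^n + \zeta_m^{rl}(1+a\zeta_m^{-l})^n\right].$$

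Next, I would multiply by $c_s$, sum over $s$ from $0$ to $m-2$, and interchange the order of summation. The inner sum over $s$ becomes $\sum_{s=0}^{m-2}c_s(1+a\zeta_m^{\pm l})^s = Q_{m/2-1}(1+a\zeta_m^{\pm l})$. Since $2(m/2-1)+2=m$, the defining product in Proposition \ref{polyQ} is exactly $\prod_{l=1}^{m/2-1}(x-1-a\zeta_m^l)(x-1-a\zeta_m^{-l})$, so the $m-2$ numbers $1+a\zeta_m^{\pm l}$ for $l=1,\ldots,m/2-1$ are precisely the roots of $Q_{m/2-1}$. Hence every term in the resulting double sum vanishes, yielding $\sum_{s=0}^{m-2}c_s\Delta_m(r,n+s)=0$.

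There is no genuine obstacle here: the argument is a direct transcription of the proof of Theorem \ref{oddm}. The only subtle bookkeeping point is to verify that the $l=m/2$ contribution is exactly cancelled by the $(-1)^r(1-a)^n$ correction built into the definition of $\Delta_m(r,n)$ for even $m$, so that after pairing conjugate roots, $\Delta_m(r,n)$ depends only on the $m-2$ values annihilated by $Q_{m/2-1}$.
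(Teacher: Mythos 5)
Your proposal is correct and follows exactly the same route as the paper's proof: apply Lemma \ref{basicsum}, note that the $l=0$ and $l=m/2$ terms are precisely the corrections built into $\Delta_m(r,n)$ for even $m$, pair $l$ with $m-l$, swap the order of summation, and observe that each inner sum is $Q_{\frac m2-1}$ evaluated at one of its roots $1+a\zeta_m^{\pm l}$. No gaps; the bookkeeping point you flag about the $l=m/2$ contribution is handled the same way in the paper.
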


\begin{proof}
  By Lemma \ref{basicsum}, we have
  \begin{align*}
   \Delta_m(r,n)
   &=m\left[\begin{array}{c}n \\ r\\\end{array}\right] _{m}(a)-(1+a)^n-(-1)^{r}(1-a)^n\\
   &=\sum_{\substack{l=1\\l\neq\frac m2}}^{m-1}\zeta_m^{-rl}(1+a\zeta_m^{l})^n\\
   &=\sum\limits_{l=1}^{\frac{m}{2}-1}\left[\zeta_m^{-rl}(1+a\zeta_m^{l})^n+\zeta_m^{rl}(1+a\zeta_m^{-l})^n\right].
  \end{align*}
  Thus
  $$\begin{array}{l}
  \sum\limits_{s=0}^{m-2}c_s\Delta_m(r,n+s)\\
  =\sum\limits_{s=0}^{m-2}c_s\sum\limits_{l=1}^{\frac{m}{2}-1}\left[\zeta_m^{-rl}(1+a\zeta_m^{l})^{n+s}+\zeta_m^{rl}(1+a\zeta_m^{-l})^{n+s}\right]\\
  =\sum\limits_{l=1}^{\frac{m}{2}-1}\zeta_m^{-rl}(1+a\zeta_m^{l})^{n}\sum\limits_{s=0}^{m-2}c_s(1+a\zeta_m^{l})^{s}
    +\sum\limits_{l=1}^{\frac{m}{2}-1}\zeta_m^{rl}(1+a\zeta_m^{-l})^{n}\sum\limits_{s=0}^{m-2}c_s(1+a\zeta_m^{-l})^{s}\\
    =\sum\limits_{l=1}^{\frac{m}{2}-1}\zeta_m^{-rl}(1+a\zeta_m^{l})^{n}Q_{\frac{m}{2}-1}(1+a\zeta_m^{l})+
    \sum\limits_{l=1}^{\frac{m}{2}-1}\zeta_m^{rl}(1+a\zeta_m^{-l})^{n}Q_{\frac{m}{2}-1}(1+a\zeta_m^{-l})\\
    =0+0\\
    =0.
  \end{array}$$

\end{proof}

\section{$\Delta_3(r,n)$ and Related Lucas Quotients}

In this section, we consider the calculation of $\Delta_3(r,n)$.

\subsection{General Properties}

\begin{theorem}\label{m3}
  Let $\{u_n\}_{n\geq0}$ be the Lucas sequence defined as $$u_0=0,\;u_1=1,\;u_{n+1}=(2-a)u_n-(a^2-a+1)u_{n-1}\mbox{ for }n\geq1.$$ Then we have, for $n\geq1$,
  \begin{align*}
  \Delta_3(0,n)&=(2-a)u_n-2(a^2-a+1)u_{n-1}=2u_{n+1}-(2-a)u_n,\\
  \Delta_3(1,n)&=(2a-1)u_n+(a^2-a+1)u_{n-1}=-u_{n+1}+(a+1)u_n,\\
  \Delta_3(2,n)&=(-a-1)u_n+(a^2-a+1)u_{n-1}=-u_{n+1}-(2a-1)u_n.
  \end{align*}
\end{theorem}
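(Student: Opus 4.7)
The plan is to apply Theorem \ref{oddm} with $m=3$ to show that, as a function of $n$, $\Delta_3(r,n)$ satisfies exactly the Lucas recurrence defining $\{u_n\}$, and then to pin down the two free constants by computing two initial values in closed form.

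For the first step, the polynomial $G_1(x)$ from Proposition \ref{polyG} is
$$G_1(x) = x^2 + (a-2)x + (a^2-a+1),$$
so $b_0 = a^2-a+1$, $b_1 = a-2$, $b_2 = 1$, and Theorem \ref{oddm} gives
$$\Delta_3(r, n+2) = (2-a)\Delta_3(r, n+1) - (a^2-a+1)\Delta_3(r, n)$$
for all $n\geq 1$. This is precisely the recurrence used in the statement to define $u_n$. Since both $\{u_n\}_{n\geq1}$ and its shift $\{u_{n-1}\}_{n\geq1}$ solve this linear recurrence and are linearly independent, I can write
$$\Delta_3(r,n) = \alpha_r\, u_n + \beta_r\, u_{n-1}, \qquad n\geq1,$$
for scalars $\alpha_r,\beta_r$ determined solely by the initial values at $n=1$ and $n=2$.

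For the second step, I would compute those initial values directly from Definition \ref{defsum}. A short calculation yields
$$\Delta_3(0,1)=2-a,\quad \Delta_3(1,1)=2a-1,\quad \Delta_3(2,1)=-1-a,$$
and, using $(1+a)^2=1+2a+a^2$,
$$\Delta_3(0,2)=2-2a-a^2,\quad \Delta_3(1,2)=-1+4a-a^2,\quad \Delta_3(2,2)=-1-2a+2a^2.$$
Since $u_0=0$, $u_1=1$, $u_2=2-a$, the formulas $\alpha_r = \Delta_3(r,1)$ and $\beta_r = \Delta_3(r,2) - (2-a)\Delta_3(r,1)$ follow, and yield $\beta_0=-2(a^2-a+1)$ and $\beta_1=\beta_2=a^2-a+1$. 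This gives exactly the first equality in each of the three stated identities.

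For the final step, I would replace $(a^2-a+1)u_{n-1}$ by $(2-a)u_n - u_{n+1}$ via the defining recurrence, which converts each expression $\alpha_r u_n + \beta_r u_{n-1}$ into the claimed form in $u_{n+1},u_n$. The argument contains no serious obstacle: Theorem \ref{oddm} does all the structural work, and what remains is a careful but elementary verification of initial conditions together with a one-line algebraic rewrite. The only place to be cautious is bookkeeping the coefficients $\alpha_r,\beta_r$ correctly so that the three cases $r=0,1,2$ come out symmetric and match the stated closed forms.
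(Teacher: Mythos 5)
Your proposal is correct and follows essentially the same route as the paper: both derive the recurrence $\Delta_3(r,n+2)=(2-a)\Delta_3(r,n+1)-(a^2-a+1)\Delta_3(r,n)$ from Theorem \ref{oddm} via $G_1(x)$, pin down the answer using the initial values at $n=1,2$ (your linear-algebra phrasing with the basis $u_n,u_{n-1}$ is just the paper's induction in different clothing), and then rewrite $(a^2-a+1)u_{n-1}$ via the defining recurrence to get the second form of each identity. Your computed values of $\Delta_3(r,1)$, $\Delta_3(r,2)$, $\alpha_r$, and $\beta_r$ all check out.
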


\begin{proof}
By Lemma \ref{lucasbasic}, we have, for $n\geq1$,
\begin{align*}
 (2-a)u_n-2(a^2-a+1)u_{n-1}&=2u_{n+1}-(2-a)u_n,\\
  (2a-1)u_n+(a^2-a+1)u_{n-1}&=-u_{n+1}+(a+1)u_n,\\
  (-a-1)u_n+(a^2-a+1)u_{n-1}&=-u_{n+1}-(2a-1)u_n.
  \end{align*}
  Since $u_2=2-a$, one can verify the following simple facts:
  \begin{align*}
   \Delta_3(0,1)&=-a+2=(2-a)u_1-2(a^2-a+1)u_{0},\\
   \Delta_3(0,2)&=-a^2-2a+2=(2-a)u_2-2(a^2-a+1)u_{ 1},\\
   \Delta_3(1,1)&=2a-1=(2a-1)u_1+(a^2-a+1)u_{0},\\
   \Delta_3(1,2)&=-a^2+4a-1=(2a-1)u_2+(a^2-a+1)u_{1},\\
   \Delta_3(2,1)&=-a-1=(-a-1)u_1+(a^2-a+1)u_{0},\\
  \Delta_3(2,2)&=2a^2-2a-1=(-a-1)u_2+(a^2-a+1)u_{1}.
 \end{align*}
 By Theorem \ref{oddm}, we have $$ \Delta_3(r,n+2)=(2-a)\Delta_3(r,n+1)-(a^2-a+1)\Delta_3(r,n)\mbox{ for }n\geq1.$$
 Then we can prove the theorem by induction on $n$.
 \end{proof}

 \begin{remark}\label{m3v}
 Let $\{v_n\}_{n\geq0}$ be the Lucas sequence defined as $$v_0=2,\;v_1=2-a,\;v_{n+1}=(2-a)v_n-(a^2-a+1)v_{n-1}\mbox{ for }n\geq1.$$ Then, by the above theorem and Lemma \ref{lucasbasic}, we have, for $n\geq1$,
  \begin{align*}
  \Delta_3(0,n)&=v_n,\\
  \Delta_3(1,n)&=-\frac{1}{a}v_n+\frac{a^2-a+1}{a}v_{n-1},\\
  \Delta_3(2,n)&=-\frac{a-1}{a}v_n-\frac{a^2-a+1}{a}v_{n-1}.
  \end{align*}
 \end{remark}

\begin{theorem}\label{3lucas}
  Let $p\nmid3a(a^3+1)$ be an odd prime, $\{u_n\}_{n\geq0}$ as in Theorem \ref{m3}, and
   $K_{p,3,r}(a)$ as in Definition \ref{defsum}.  Then we have:

    (1) for $p\equiv1\pmod 3$,
    \begin{align*}
\frac{u_{p-1}}{p}\equiv&\frac{(2a-1)K_{p,3,0}(a)+(a-2)K_{p,3,1}(a)}{a(a^2-a+1)}\\
&-\frac{a-2}{a^2-a+1}q_p(a)+\frac{a^2-1}{a(a^2-a+1)}q_p(a+1)\pmod p;
\end{align*}
(2) for $p\equiv2\pmod 3$,
$$\frac{u_{p+1}}{p}\equiv\frac{(a-2)K_{p,3,1}(a)-(a+1)K_{p,3,0}(a)}{a}-\frac{a+1}aq_p(a+1)\pmod p.$$
\end{theorem}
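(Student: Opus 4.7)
The plan is to combine the mod-$p^2$ expansion of $\left[\begin{smallmatrix}p \\ r\end{smallmatrix}\right]_3(a)$ supplied by Lemma \ref{K}(3) with the closed forms for $\Delta_3(r,n)$ proved in Theorem \ref{m3}, specialize to $n=p$, and then invert a $2\times 2$ linear system to isolate $u_{p-1}$ in case (1) or $u_{p+1}$ in case (2).

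Unwinding $\Delta_3(r,p)=3\left[\begin{smallmatrix}p \\ r\end{smallmatrix}\right]_3(a)-(1+a)^p$ via Lemma \ref{K}(3) yields, for $r\in\{0,1,2\}$,
\[
\Delta_3(r,p)\equiv 3\delta_{0\equiv r(3)}+3\delta_{p\equiv r(3)}a^p-(1+a)^p-3pK_{p,3,r}(a)\pmod{p^2}.
\]
For the Lucas sequence of Theorem \ref{m3} one has $D=B^2-4A=(2-a)^2-4(a^2-a+1)=-3a^2$, so $\varepsilon=\bigl(\tfrac{D}{p}\bigr)=\bigl(\tfrac{-3}{p}\bigr)$ equals $+1$ in case (1) and $-1$ in case (2), and Lemma \ref{lucascongru} gives $u_{p-\varepsilon}\equiv 0\pmod p$, so the Lucas quotient is well-defined. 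The determinant of any two-row block of the system in Theorem \ref{m3} equals $\pm 3a(a^2-a+1)$; the hypothesis $p\nmid 3a(a^3+1)=3a(a+1)(a^2-a+1)$ forces this determinant to be invertible modulo $p$ and also guarantees that $q_p(a)$ and $q_p(a+1)$ are defined.

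For case (1), $p\equiv 1\pmod 3$, so $\delta_{0\equiv r(3)}$ is nonzero only at $r=0$ and $\delta_{p\equiv r(3)}$ only at $r=1$. I take the rows $r=0,1$ of Theorem \ref{m3} in its $u_p,u_{p-1}$ form and apply Cramer's rule to obtain
\[
u_{p-1}=\frac{(2-a)\Delta_3(1,p)-(2a-1)\Delta_3(0,p)}{3a(a^2-a+1)}.
\]
Substituting the congruences above, the $p^0$-level contributions $3(2-a)a-3(2a-1)=3(1-a^2)$ (from the $a^p$ and constant pieces) and $\bigl((2a-1)-(2-a)\bigr)(1+a)=3(a^2-1)$ (from the $(1+a)^p$ piece) cancel exactly, leaving only multiples of $p$. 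Replacing $a^p$ and $(1+a)^p$ by $a+paq_p(a)$ and $(1+a)+p(1+a)q_p(a+1)$ respectively, and then dividing by $p$, produces the claimed congruence for $u_{p-1}/p$.

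Case (2) is parallel, using instead the $u_p,u_{p+1}$ form of Theorem \ref{m3}: the combination $(a+1)\Delta_3(0,p)+(2-a)\Delta_3(1,p)=3au_{p+1}$ is especially convenient because $(a+1)+(2-a)=3$ collapses $3(a+1)-3(1+a)^p$ directly to $-3p(a+1)q_p(a+1)\pmod{p^2}$ with no other $p^0$-residue (note that for $r\in\{0,1\}$ the $3a^p$ term does not appear when $p\equiv 2\pmod 3$), after which division by $3pa$ yields the stated formula. The only real obstacle is arithmetic bookkeeping---choosing the pair of rows of Theorem \ref{m3} so that the constants and the $(1+a)^p$ contributions cancel cleanly, and tracking signs in the Fermat-quotient coefficients.
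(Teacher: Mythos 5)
Your proposal is correct and follows essentially the same route as the paper: the two linear combinations you form (your Cramer's-rule identities are exactly the paper's equations $(2a-1)\Delta_3(0,p)+(a-2)\Delta_3(1,p)=-3a(a^2-a+1)u_{p-1}$ and $(a+1)\Delta_3(0,p)-(a-2)\Delta_3(1,p)=3au_{p+1}$), combined with the mod-$p^2$ expansion from Lemma \ref{K}(3), are precisely the paper's argument, and your cancellation bookkeeping checks out.
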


\begin{proof}
Since $(2-a)^2-4(a^2-a+1)=-3a^2$, by Lemma \ref{lucascongru}, we have $$p\mid u_{p-\left(\frac{-3}{p}\right)}.$$
  By Theorem \ref{m3}, we have
  \begin{equation}\label{3p-1}
  (2a-1)\Delta_3(0,p)+(a-2)\Delta_3(1,p)=-3a(a^2-a+1)u_{p-1},
  \end{equation}
  and
  \begin{equation}\label{3p+1}
  (a+1)\Delta_3(0,p)-(a-2)\Delta_3(1,p)=3au_{p+1}.
  \end{equation}
  If $p\equiv1\pmod 3$, by Lemma \ref{K}(3), we have
  $$\Delta_3(0,p)\equiv3-3pK_{p,3,0}(a)-(1+a)^p\pmod{p^2},$$
  $$\Delta_3(1,p)\equiv3a^p-3pK_{p,3,1}(a)-(1+a)^p\pmod{p^2}.$$
  Then, by Eq.(\ref{3p-1}),
   \begin{align*}
  a(a^2-a+1)u_{p-1}\equiv& p\left[(2a-1)K_{p,3,0}(a)+(a-2)K_{p,3,1}(a)\right]\\
  &-(a^2-2a)(a^{p-1}-1)+(a^2-1)\left[(a+1)^{p-1}-1\right]\pmod{p^2}.
  \end{align*}
  Thus \begin{align*}
\frac{u_{p-1}}{p}\equiv&\frac{(2a-1)K_{p,3,0}(a)+(a-2)K_{p,3,1}(a)}{a(a^2-a+1)}\\
&-\frac{a-2}{a^2-a+1}q_p(a)+\frac{a^2-1}{a(a^2-a+1)}q_p(a+1)\pmod p.
\end{align*}
  \noindent If $p\equiv2\pmod 3$, by Lemma \ref{K}(3), we have
  $$\Delta_3(0,p)\equiv3-3pK_{p,3,0}(a)-(1+a)^p\pmod{p^2},$$
  $$\Delta_3(1,p)\equiv-3pK_{p,3,1}(a)-(1+a)^p\pmod {p^2}.$$
  Then, by Eq.(\ref{3p+1}),
  $$au_{p+1}\equiv p\left[(a-2)K_{p,3,1}(a)-(a+1)K_{p,3,0}(a)\right]-\left[(a+1)^p-(a+1)\right]\pmod{p^2}.$$
  Thus $$\frac{u_{p+1}}p\equiv\frac{1}{a}\left[(a-2)K_{p,3,1}(a)-(a+1)K_{p,3,0}(a)\right]-\frac{a+1}aq_p(a+1)\pmod p.$$
\end{proof}

Given a value of $a$, by Theorem \ref{3lucas}, we can obtain a concrete congruence for a specific Lucas quotient. We provide one such example.

\begin{corollary}
  Let $\{u_n\}_{n\geq0}$ be the Lucas sequence defined by $$u_0=0,\;u_1=1,\;u_{n+1}=4u_n-7u_{n-1}\mbox{ for }n\geq1,$$ and $p\neq3,7$ be an odd prime.
  Then we have
  $$\frac{u_{p-1}}{p}\equiv\frac{5}{42}\sum\limits_{k=1}^{\frac{p-1}{3}}\frac{8^k}{k}+\frac{1}{14}\sum\limits_{k=1}^{\frac{p-1}{3}}\frac{8^k}{3k-2}
  +\frac{4}{7}q_p(2)
  \pmod p,\mbox{ if }p\equiv1\pmod 3;$$
  $$\frac{u_{p+1}}{p}\equiv\frac{1}{2}\sum\limits_{k=1}^{\frac{p+1}{3}}\frac{8^k}{3k-2}
  -\frac{1}{6}\sum\limits_{k=1}^{\frac{p-2}{3}}\frac{8^k}{k}\pmod p,\mbox{ if }p\equiv2\pmod 3.$$
\end{corollary}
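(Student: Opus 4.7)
\bigskip

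\noindent\textbf{Proof proposal.}

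The plan is to identify the corollary as a specialization of Theorem \ref{3lucas}. The recurrence $u_{n+1}=4u_n-7u_{n-1}$ matches the recurrence $u_{n+1}=(2-a)u_n-(a^2-a+1)u_{n-1}$ of Theorem \ref{m3} precisely when $2-a=4$ and $a^2-a+1=7$, i.e.\ when $a=-2$. The hypotheses of Theorem \ref{3lucas} require $p\nmid 3a(a^3+1)=3\cdot(-2)\cdot(-7)=42$, so $p\neq3,7$ suffices, matching the statement.

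Next I would substitute $a=-2$ into the two congruences of Theorem \ref{3lucas}, reducing the rational coefficients: $2a-1=-5$, $a-2=-4$, $a+1=-1$, $a^2-1=3$, $a(a^2-a+1)=-14$. In the Fermat-quotient terms, note that $q_p(-1)=0$ for any odd prime $p$, so by Lemma \ref{fermatquo}(2) we have $q_p(a)=q_p(-2)=q_p(2)$, while $q_p(a+1)=q_p(-1)=0$ disappears entirely. This already converts the formulas of Theorem \ref{3lucas} to
\[
\frac{u_{p-1}}{p}\equiv \frac{5}{14}K_{p,3,0}(-2)+\frac{2}{7}K_{p,3,1}(-2)+\frac{4}{7}q_p(2)\pmod p
\]
when $p\equiv1\pmod3$, and
\[
\frac{u_{p+1}}{p}\equiv 2K_{p,3,1}(-2)-\tfrac{1}{2}K_{p,3,0}(-2)\pmod p
\]
when $p\equiv2\pmod3$.

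It then remains to rewrite the two $K$-sums in the stated form. By the definition, $K_{p,3,r}(-2)=\sum_{k\equiv r(\mathrm{mod}\,3),\,1\le k\le p-1}\frac{2^k}{k}$. For $r=0$ I set $k=3j$, obtaining $\frac{2^{3j}}{3j}=\frac{1}{3}\cdot\frac{8^j}{j}$; for $r=1$ I set $k=3j-2$, obtaining $\frac{2^{3j-2}}{3j-2}=\frac{1}{4}\cdot\frac{8^j}{3j-2}$. The only delicate bookkeeping is the upper limit on $j$: when $p\equiv1\pmod3$ the largest admissible $k\equiv0\pmod3$ is $p-1$, giving $j\le(p-1)/3$, and the largest $k\equiv1\pmod3$ is $p-3$, again giving $j\le(p-1)/3$; when $p\equiv2\pmod3$ the largest $k\equiv0\pmod3$ is $p-2$, so $j\le(p-2)/3$, while the largest $k\equiv1\pmod3$ is $p-1$, so $j\le(p+1)/3$. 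Plugging these reindexed expressions into the two displayed congruences yields exactly the stated formulas; the coefficients collapse as $\frac{5}{14}\cdot\frac{1}{3}=\frac{5}{42}$, $\frac{2}{7}\cdot\frac{1}{4}=\frac{1}{14}$, $2\cdot\frac{1}{4}=\frac{1}{2}$, and $\frac{1}{2}\cdot\frac{1}{3}=\frac{1}{6}$.

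There is no conceptual obstacle here: once $a=-2$ is identified, everything is a routine substitution. The only point that requires any care is the reindexing of the $K$-sums, because the range of summation changes depending on whether $p\equiv1$ or $p\equiv2\pmod3$; I would verify the upper limits explicitly in the two cases as above to be sure the endpoints match those in the corollary.
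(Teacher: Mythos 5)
Your proposal is correct and follows exactly the paper's route: the paper's entire proof is ``Set $a=-2$ in Theorem \ref{3lucas},'' and your substitution, the observation that $q_p(-1)=0$ and $q_p(-2)=q_p(2)$, and the reindexing of the $K$-sums with the case-dependent upper limits all check out.
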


\begin{proof}
  Set $a=-2$ in Theorem \ref{3lucas}.
\end{proof}

   \subsection{The Case a=2}

   If $a=2$, by Theorem \ref{oddm}, we have $\Delta_3(r,n+2)=-3\Delta_3(r,n)$ for $n\geq1$. Thus we have a refinement of Theorem \ref{m3}.

\begin{theorem}\label{2m3}
  Set $a=2$. Let $\Delta_3(r,n)=3\left[\begin{array}{c}n \\ r\\\end{array}\right] _{3}(2)-3^n$ for $n\geq1$. Then we have:
  if $n$ is odd, $$\Delta_3(0,n)=0,\;\Delta_3(1,n)=-(-3)^{\frac{n+1}{2}},\;\Delta_3(2,n)=(-3)^{\frac{n+1}{2}};$$
  if $n$ is even, $$\Delta_3(0,n)=2\cdot(-3)^{\frac{n}{2}},\;\Delta_3(1,n)=\Delta_3(2,n)=-(-3)^{\frac{n}{2}}.$$
\end{theorem}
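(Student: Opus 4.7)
The plan is to specialize the recurrence of Theorem \ref{oddm} to $a=2$ and $m=3$. With $a=2$, the polynomial $G_1(x)=x^2+(a-2)x+(a^2-a+1)$ from Proposition \ref{polyG} collapses to $G_1(x)=x^2+3$, whose coefficients are $b_0=3$, $b_1=0$, $b_2=1$. Theorem \ref{oddm} then yields
\[
\Delta_3(r,n+2)+3\Delta_3(r,n)=0 \quad \text{for } n\geq 1,
\]
i.e., $\Delta_3(r,n+2)=-3\,\Delta_3(r,n)$. This pure two-term recurrence decouples the odd-$n$ and even-$n$ subsequences, so each of the three sequences $n\mapsto\Delta_3(r,n)$ (for $r=0,1,2$) is determined by its two initial values $\Delta_3(r,1)$ and $\Delta_3(r,2)$.

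Next I would verify the base cases directly from Definition \ref{defsum}. For $n=1$ the sums $\left[\begin{array}{c}1\\r\end{array}\right]_3(2)$ equal $1$, $2$, $0$ for $r=0,1,2$ respectively, giving $\Delta_3(0,1)=0$, $\Delta_3(1,1)=3=-(-3)^1$, and $\Delta_3(2,1)=-3=(-3)^1$. For $n=2$ the sums equal $1$, $4$, $4$, giving $\Delta_3(0,2)=-6=2\cdot(-3)^1$ and $\Delta_3(1,2)=\Delta_3(2,2)=3=-(-3)^1$. These agree with the claimed closed forms at the smallest odd and even indices.

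The remainder is an immediate induction on $n$: each application of $\Delta_3(r,n+2)=-3\,\Delta_3(r,n)$ advances the exponent of $-3$ by one, which is exactly the behavior recorded by $(n+1)/2$ in the odd case and $n/2$ in the even case, and the sign patterns carry through unchanged. There is no serious obstacle here; the one observation worth flagging is that $a=2$ is the unique value making the middle coefficient $a-2$ of $G_1$ vanish while simultaneously $a^2-a+1=3$, and it is exactly this degeneracy that collapses the three-term Lucas recurrence of Theorem \ref{m3} into the parity-respecting two-term recurrence used above, which is why the closed forms $\pm(-3)^{\lceil n/2\rceil}$ appear so cleanly.
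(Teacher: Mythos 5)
Your proposal is correct and follows the paper's own argument exactly: both derive the two-term recurrence $\Delta_3(r,n+2)=-3\,\Delta_3(r,n)$ from Theorem \ref{oddm} with $a=2$, check the base cases $n=1,2$ by direct computation, and finish by induction. The base values you compute agree with the paper's, so there is nothing to add.
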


\begin{proof}
  Since $a=2$, by Theorem \ref{oddm}, we have $\Delta_3(r,n+2)=-3\Delta_3(r,n)$ for $n\geq1$.
  One can verify that
   $$\Delta_3(0,1)=0,\;\Delta_3(1,1)=3,\;\Delta_3(2,1)=-3;$$
  $$\Delta_3(0,2)=-6,\;\Delta_3(1,2)=3,\;\Delta_3(2,2)=3.$$
  Then we can prove the theorem by induction on $n$.
\end{proof}

\begin{remark}
  Using the above theorem and without the use of the Quadratic Reciprocity Law, for an odd prime $p>3$, we can get the Legendre symbol
  $$ \left(\frac{-3}{p}\right)=\left\{
  \begin{array}{rl}
  1,&\mbox{if }p\equiv1\pmod 3,\\
  -1,&\mbox{if }p\equiv2\pmod 3.\end{array}\right.$$
\end{remark}

\begin{proof}
If $p\equiv1\pmod3$, by Theorem \ref{2m3}, we have $\Delta_3(1,p)=-(-3)^{\frac{p+1}{2}}$. Since
\begin{align*}
\Delta_3(1,p)=3\left[\begin{array}{c}p \\ 1\\\end{array}\right] _{3}(2)-3^p=3\sum_{\substack{k=0\\k\equiv 1({\mbox{mod }}3)}}^p{p\choose k}2^k-3^p\\
\equiv3\cdot2^p-3^p\equiv3\cdot2-3=3\pmod p,
\end{align*}
we have $(-3)^{\frac{p-1}{2}}\equiv1\pmod p$. Hence $\left(\frac{-3}{p}\right)=1$.

If $p\equiv2\pmod3$, by Theorem \ref{2m3}, we have $\Delta_3(2,p)=(-3)^{\frac{p+1}{2}}$. Similarly, we can obtain $\left(\frac{-3}{p}\right)=-1$.

\end{proof}

\begin{corollary}
  Let $p>3$ be an odd prime. Then we have $$\sum\limits_{k=1}^{[\frac{p}{3}]}\frac{(-8)^k}{k}\equiv-3q_p(3)\pmod p.$$
\end{corollary}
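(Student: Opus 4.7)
The strategy is to recognize that the left-hand side equals $3K_{p,3,0}(2)$ (in the notation of Definition \ref{defsum}) after a single re-indexing, and then to extract the required congruence from the $a=2$ specialization of the $\Delta_3$-machinery already set up.

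First, I would reindex $k\mapsto 3j$. Since $(-8)^j=(-2)^{3j}$, and because $p>3$ is not divisible by $3$, the set $\{3,6,\ldots,3[p/3]\}$ coincides with $\{j:1\leq j\leq p-1,\;j\equiv 0\pmod 3\}$; hence
\[
\sum_{k=1}^{[p/3]}\frac{(-8)^k}{k}=3\sum_{\substack{j=1\\ j\equiv 0\,(\mbox{mod }3)}}^{p-1}\frac{(-2)^j}{j}=3K_{p,3,0}(2).
\]
So it suffices to show that $K_{p,3,0}(2)\equiv -q_p(3)\pmod p$.

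Second, I would feed $n=p$ (which is odd) into Theorem \ref{2m3}, obtaining the exact identity $\Delta_3(0,p)=0$. By the definition of $\Delta_3$ for odd $m$, together with $1+a=3$, this amounts to $3\left[\begin{array}{c}p\\ 0\end{array}\right]_{3}(2)=3^p$. On the other hand, Lemma \ref{K}(3) applied with $m=3$, $r=0$, $a=2$ (noting that $0\equiv 0\pmod 3$ while $p\not\equiv 0\pmod 3$) gives
\[
3\left[\begin{array}{c}p\\ 0\end{array}\right]_{3}(2)\equiv 3-3pK_{p,3,0}(2)\pmod{p^2}.
\]
Comparing the two displays yields $3^p\equiv 3-3pK_{p,3,0}(2)\pmod{p^2}$, and dividing through by $3p$ (which is legal since $p>3$) produces $K_{p,3,0}(2)\equiv (1-3^{p-1})/p=-q_p(3)\pmod p$, completing the reduction from the first step.

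There is no real obstacle here: Theorem \ref{2m3} supplies the exact vanishing $\Delta_3(0,p)=0$, which is precisely what collapses the general Theorem \ref{3lucas}-style mechanism into a clean one-line extraction of the Fermat quotient $q_p(3)$. The only routine point is checking that the re-indexing exhausts the correct range, and this is automatic because $p\not\equiv 0\pmod 3$.
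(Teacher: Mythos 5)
Your proof is correct and follows essentially the same route as the paper: both use Theorem \ref{2m3} (via $\Delta_3(0,p)=0$) to get $\left[\begin{array}{c}p\\ 0\end{array}\right]_{3}(2)=3^{p-1}$ and then apply Lemma \ref{K}(3) with $m=3$, $r=0$, $a=2$ to identify $K_{p,3,0}(2)\equiv -q_p(3)\pmod p$. The only difference is that you spell out the re-indexing $3K_{p,3,0}(2)=\sum_{k=1}^{[p/3]}(-8)^k/k$, which the paper treats as immediate.
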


\begin{proof}
  By Theorem \ref{2m3}, we have $$\left[\begin{array}{c}p\\ 0\\\end{array}\right] _{3}(2)=3^{p-1}.$$
  By Lemma \ref{K}(3), we have
  \begin{align*}
   \sum\limits_{k=1}^{[\frac{p}{3}]}\frac{(-8)^k}{k}=3K_{p,3,0}(2)\equiv3\frac{1-\left[\begin{array}{c}p\\ 0\\\end{array}\right] _{3}(2)}{p}\\
   =3\frac{1-3^{p-1}}{p}=-3q_p(3)\pmod p.
   \end{align*}
 \end{proof}

 \begin{corollary}
   Let $p>3$ be an odd prime. Then we have
    $$\sum\limits_{k=1}^{[\frac{p+1}{3}]}\frac{(-8)^k}{12k-8}+\sum\limits_{k=1}^{[\frac{p}{3}]}\frac{(-8)^k}{6k-2}\equiv
    \left(\frac{-3}{p}\right)\left(2q_p(2)-q_p(3)\right)\pmod p.$$
 \end{corollary}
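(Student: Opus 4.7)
The plan is to recognize the left-hand side of the corollary as the difference $K_{p,3,1}(2)-K_{p,3,2}(2)$, and then to compute that difference modulo $p$ by combining Theorem \ref{2m3}, Lemma \ref{K}(3), and Lemma \ref{fermatquo}(1).

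First I would re-index both sums. Substituting $l=k-1$ in the first sum gives $\sum_{k=1}^{[(p+1)/3]}\frac{(-8)^k}{12k-8}=-2\sum_{l=0}^{[(p+1)/3]-1}\frac{(-8)^l}{3l+1}$, while writing $k=3j+1$ in the definition of $K_{p,3,1}(2)$ and using $(-2)^{3j+1}=-2(-8)^j$ yields $K_{p,3,1}(2)=-2\sum_{j}\frac{(-8)^j}{3j+1}$; a quick case check ($p=3t+1$ versus $p=3t+2$) shows that the two index ranges coincide, so the first sum equals $K_{p,3,1}(2)$. The analogous substitution in the second sum, together with $(-2)^{3l+2}=4(-8)^l$, identifies it with $-K_{p,3,2}(2)$. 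So the left-hand side of the corollary is exactly $K_{p,3,1}(2)-K_{p,3,2}(2)$.

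Next I would evaluate this difference modulo $p$ in two ways. Theorem \ref{2m3} gives $\Delta_3(1,p)-\Delta_3(2,p)=-2(-3)^{(p+1)/2}$, while combining $\Delta_3(r,p)=3\left[\begin{array}{c}p\\r\end{array}\right]_3(2)-3^p$ with Lemma \ref{K}(3) gives
\[
\Delta_3(r,p)\equiv 3\delta_{0\equiv r(3)}+3\delta_{p\equiv r(3)}\cdot 2^p-3pK_{p,3,r}(2)-3^p\pmod{p^2}.
\]
Subtracting the $r=2$ expression from the $r=1$ expression, the $\delta_{0\equiv r(3)}$ contributions cancel and exactly one of the $\delta_{p\equiv r(3)}\cdot 2^p$ terms survives, with a sign that depends on whether $p\equiv 1$ or $p\equiv 2\pmod 3$. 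Equating the two expressions for $\Delta_3(1,p)-\Delta_3(2,p)$ then solves for $K_{p,3,1}(2)-K_{p,3,2}(2)$ modulo $p$ in terms of $2^p$ and $(-3)^{(p+1)/2}$, with opposite signs in the two cases.

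Finally I would plug in Fermat quotient expansions. Write $2^p\equiv 2+2pq_p(2)\pmod{p^2}$, and apply Lemma \ref{fermatquo}(1) to $x=-3$; using $q_p(-3)=q_p(3)$ (since $p-1$ is even, or via Lemma \ref{fermatquo}(2)) this gives $(-3)^{(p-1)/2}\equiv\left(\frac{-3}{p}\right)\bigl(1+pq_p(3)/2\bigr)\pmod{p^2}$, hence $(-3)^{(p+1)/2}\equiv -3\left(\frac{-3}{p}\right)\bigl(1+pq_p(3)/2\bigr)\pmod{p^2}$. Substituting into the formula from the previous step, the constant parts cancel and the surviving $p$-parts collapse to $\left(\frac{-3}{p}\right)(2q_p(2)-q_p(3))$, which is the claimed congruence. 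The main obstacle, as I see it, is purely the sign bookkeeping: the two residue classes $p\equiv 1,2\pmod 3$ produce opposite signs at the intermediate stage, and one needs to see that these are exactly absorbed by the value of $\left(\frac{-3}{p}\right)$ so that both cases fuse into the single uniform formula stated.
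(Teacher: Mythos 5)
Your proposal is correct and follows essentially the same route as the paper: identify the left-hand side as $K_{p,3,1}(2)-K_{p,3,2}(2)$, use Theorem \ref{2m3} to get the exact value of $\left[\begin{smallmatrix}p\\1\end{smallmatrix}\right]_3(2)-\left[\begin{smallmatrix}p\\2\end{smallmatrix}\right]_3(2)$ (equivalently $\Delta_3(1,p)-\Delta_3(2,p)$), extract the $K$-difference via Lemma \ref{K}(3) modulo $p^2$, and finish with Lemma \ref{fermatquo}(1) applied to $x=-3$ together with $2^p\equiv 2+2pq_p(2)$. The only cosmetic difference is that the paper absorbs the case split $p\equiv 1,2\pmod 3$ into the symbol $\left(\frac{-3}{p}\right)$ from the outset rather than fusing the two cases at the end.
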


 \begin{proof}
   By Theorem \ref{2m3},
    $$\left[\begin{array}{c}p\\ 1\\\end{array}\right] _{3}(2)=3^{p-1}+(-3)^{\frac{p-1}{2}},
    \left[\begin{array}{c}p\\ 2\\\end{array}\right] _{3}(2)=3^{p-1}-(-3)^{\frac{p-1}{2}}.  $$
    Then $$\left[\begin{array}{c}p\\ 1\\\end{array}\right] _{3}(2)-\left[\begin{array}{c}p\\ 2\\\end{array}\right] _{3}(2)=2\cdot(-3)^{\frac{p-1}{2}}.$$
     By Lemmas \ref{K} and \ref{fermatquo}, we have
    \begin{align*}
    &\quad  \sum\limits_{k=1}^{[\frac{p+1}{3}]}\frac{(-8)^k}{12k-8}+\sum\limits_{k=1}^{[\frac{p}{3}]}\frac{(-8)^k}{6k-2}\\
    &=K_{p,3,1}(2)-K_{p,3,2}(2)\\
    &\equiv\frac{2^p\cdot\left(\frac{-3}{p}\right)-2\cdot(-3)^{\frac{p-1}{2}}}{p}\\
    &=\left(\frac{-3}{p}\right)\frac{(2^p-2)+\left(2-2(-3)^{\frac{p-1}{2}}\left(\frac{-3}{p}\right)\right)}{p}\\
    &\equiv\left(\frac{-3}{p}\right)(2q_p(2)-q_p(-3))\pmod p.
    \end{align*}

 \end{proof}

 \subsection{Further Results}

 \begin{lemma}\label{vp/p}
     Let $p$ be an odd prime with $p\nmid 3a(2-a)(a^2-a+1)$, and let $\{v_n\}_{n\geq0}$ be the Lucas sequence defined as
     \[v_0=2,v_1=2-a, v_{n+1}=(2-a)v_n-(a^2-a+1)v_{n-1}\;\textup{for}\;n\geq1.\]
     Then we have
     $$\frac{v_p -(2-a)}{p}\equiv-\sum\limits_{k=1}^{[\frac p3]}\frac{(-a)^{3k}}{k}-(a+1)q_p(a+1)\pmod p.$$
   \end{lemma}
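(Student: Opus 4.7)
The plan is to recognize the desired congruence as the mod-$p^{2}$ expansion of $\Delta_3(0,p)$, using the identification $v_n = \Delta_3(0,n)$ from Remark \ref{m3v}. The proof then chains together three ingredients already in place: this identification, the definition of $\Delta_3$ for odd modulus, and the binomial-sum congruence of Lemma \ref{K}(3). The argument is a direct calculation; no new structural idea is required.

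Concretely, unfolding the definition of $\Delta_3$ gives
\[
v_p \;=\; \Delta_3(0,p) \;=\; 3\left[\begin{array}{c}p \\ 0\end{array}\right]_{3}(a) - (1+a)^p.
\]
Since $p\neq 3$, in Lemma \ref{K}(3) with $m=3,\,r=0$ we have $\delta_{0\equiv 0(3)}=1$ and $\delta_{p\equiv 0(3)}=0$, so
\[
\left[\begin{array}{c}p \\ 0\end{array}\right]_{3}(a) \;\equiv\; 1 - p\,K_{p,3,0}(a) \pmod{p^2}.
\]
The standard Fermat-quotient expansion yields $(1+a)^p \equiv (1+a) + p(a+1)\,q_p(a+1)\pmod{p^2}$. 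Substituting both expansions into the formula for $v_p$, and noting that the constant terms contribute $3-(1+a)=2-a$, I would divide by $p$ to obtain
\[
\frac{v_p-(2-a)}{p} \;\equiv\; -3\,K_{p,3,0}(a) \;-\; (a+1)\,q_p(a+1) \pmod{p}.
\]

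Finally, I would rewrite $3K_{p,3,0}(a)$ in the form appearing in the statement. The substitution $k=3j$ in the defining sum gives $K_{p,3,0}(a) = \sum_{j=1}^{[(p-1)/3]} (-a)^{3j}/(3j)$, and since $[(p-1)/3]=[p/3]$ for any prime $p\not\equiv 0\pmod 3$, multiplying by $3$ yields $3K_{p,3,0}(a)=\sum_{k=1}^{[p/3]}(-a)^{3k}/k$, which matches the statement exactly. The only subtlety worth flagging is that $q_p(a+1)$ requires $p\nmid a+1$; this is an implicit assumption, since if $p\mid a+1$ then $a^2-a+1\equiv 3\pmod p$, which is nonzero for $p\neq 3$, so the listed hypotheses do not automatically force $p\nmid a+1$. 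Beyond this bookkeeping, the proof presents no genuine obstacle.
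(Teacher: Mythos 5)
Your proof is correct and follows essentially the same route as the paper: identify $v_p=\Delta_3(0,p)$ via Remark \ref{m3v}, expand via Lemma \ref{K}(3) and the Fermat-quotient expansion of $(1+a)^p$, and reindex $3K_{p,3,0}(a)$. Your side remark about $p\nmid a+1$ is a fair observation; the paper covers it only by the blanket convention in its definition of the Fermat quotient.
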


   \begin{proof}
   Let $\{u_n\}_{n\geq0}$ be the Lucas sequence defined as
  \[u_0=0,u_1=1,u_{n+1}=(2-a)u_n-(a^2-a+1)u_{n-1}\;\textup{for}\;n\geq1.\]
   By Lemmas \ref{lucasbasic} and \ref{lucascongru}, we have $v_p=(2-a)u_p-2(a^2-a+1)u_{p-1}=2u_{p+1}-(2-a)u_p\equiv2-a\pmod p$.
   By Remark \ref{m3v} and Lemma \ref{K}, we have $v_p=\Delta_3(0,p)=3\left[\begin{array}{c}p \\ 0\\\end{array}\right] _{3}(a)-(1+a)^p\equiv3-3pK_{p,3,0}(a)-(1+a)^p\pmod {p^2}$. Thus
     \[\frac{v_p-(2-a)}{p}\equiv-\sum\limits_{k=1}^{[\frac p3]}\frac{(-a)^{3k}}{k}-(a+1)q_p(a+1)\pmod p.\]
   \end{proof}

   In Theorem \ref{3lucas}, when we express the Lucas quotient, it involves two $K$'s, i.e., two sums. The following theorem can reduce the Lucas quotient to one sum.

\begin{theorem}\label{3lucas+}
  Let $p$ be an odd prime with $p\nmid 3a(2-a)(a^3+1)$, and let $\{u_n\}_{n\geq0}$ be the Lucas sequence defined as
  \[u_0=0,u_1=1,u_{n+1}=(2-a)u_n-(a^2-a+1)u_{n-1}\;\textup{for}\;n\geq1.\]
   Then we have, if $p\equiv1\pmod3$,
 \begin{align*}
  \frac {u_{p-1}}p\equiv&
  \frac{2}{3a^2} \sum\limits_{k=1}^{ \frac {p-1}3 }\frac{(-a)^{3k}}{k}\\
  & +\frac{1}{3a^2}\left((2-a)q_p(a^2-a+1)+2(a+1)q_p(a+1)\right) \pmod p;
\end{align*}
if $p\equiv2\pmod3$,
\begin{align*}
  \frac {u_{p+1}} p\equiv&
 -\frac{2(a^2-a+1)}{3a^2} \sum\limits_{k=1}^{\frac {p-2}3}\frac{(-a)^{3k}}{k}\\
  &-\frac{a^2-a+1}{3a^2}\left((2-a)q_p(a^2-a+1)+2(a+1)q_p(a+1)\right) \pmod p.
\end{align*}
\end{theorem}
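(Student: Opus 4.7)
The plan is to bypass the two $K$-sums of Theorem \ref{3lucas} by exploiting a single linear identity from Lemma \ref{lucasbasic} that expresses $u_{p-\varepsilon}$ in terms of $v_p$ and $v_{p-\varepsilon}$, and then substituting the already-known congruences for those two $v$-values coming from Lemmas \ref{quotient-v} and \ref{vp/p}. This collapses the expression into one sum ($K_{p,3,0}(a)$, equivalently $\tfrac{1}{3}\sum (-a)^{3k}/k$) plus two Fermat quotients.

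Concretely, set $B = 2-a$, $A = a^2-a+1$, $D = B^2-4A = -3a^2$. From Lemma \ref{lucasbasic}, the identity $Du_n = 2v_{n+1} - Bv_n$ at $n=p-1$ rearranges to
\[
3a^2 u_{p-1} \;=\; (2-a)v_{p-1} - 2v_p,
\]
and the identity $Du_n = Bv_n - 2Av_{n-1}$ at $n=p+1$ rearranges to
\[
3a^2 u_{p+1} \;=\; 2A\,v_p - (2-a)v_{p+1}.
\]
By the values $v_{p-1}\equiv 2$, $v_p\equiv 2-a$, and $v_{p+1}\equiv 2A \pmod p$ (see the proofs of Lemmas \ref{quotient-v} and \ref{vp/p}), the right-hand sides already vanish mod $p$ in the appropriate cases, consistent with $p\mid u_{p-\varepsilon}$.

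For $p\equiv 1\pmod 3$ I would rewrite the first display as $(2-a)(v_{p-1}-2)-2(v_p-(2-a))$ (the constants $2(2-a)$ cancel), divide by $p$, and substitute Lemma \ref{quotient-v}(1) for $(v_{p-1}-2)/p$ and Lemma \ref{vp/p} for $(v_p-(2-a))/p$. Dividing through by $3a^2$ yields the stated formula, noting $[p/3]=(p-1)/3$. For $p\equiv 2\pmod 3$ the argument is symmetric: split $2Av_p - (2-a)v_{p+1}$ as $2A(v_p-(2-a)) - (2-a)(v_{p+1}-2A)$ (the constants $2A(2-a)$ cancel), then apply Lemma \ref{vp/p} and Lemma \ref{quotient-v}(2), with $[p/3]=(p-2)/3$.

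There is no genuine obstacle; the proof is essentially a two-term linear combination of previously established quotient formulas. The only points demanding care are the algebraic bookkeeping of the constant cancellations, and checking that the standing hypothesis $p\nmid 3a(2-a)(a^3+1)=3a(2-a)(a+1)(a^2-a+1)$ is strong enough to invoke both Lemma \ref{quotient-v} (which needs $p\nmid DA=-3a^2(a^2-a+1)$) and Lemma \ref{vp/p} (which needs $p\nmid 3a(2-a)(a^2-a+1)$ and implicitly $p\nmid(a+1)$ to make $q_p(a+1)$ meaningful)---and it is.
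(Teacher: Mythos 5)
Your proposal is correct and is essentially identical to the paper's own proof: the paper likewise writes $u_{p-1}=\frac{1}{3a^2}((2-a)v_{p-1}-2v_p)$ and $u_{p+1}=\frac{1}{3a^2}(2(a^2-a+1)v_p-(2-a)v_{p+1})$ via Lemma \ref{lucasbasic}, splits off the cancelling constants exactly as you describe, and then invokes Lemmas \ref{quotient-v} and \ref{vp/p}. Your extra check that the hypothesis $p\nmid 3a(2-a)(a^3+1)$ covers the requirements of both lemmas is a point the paper leaves implicit.
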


 \begin{proof}
 Let $\{v_n\}_{n\geq0}$ be the sequence as in Lemma \ref{vp/p}.
   If $p\equiv1\pmod3$, by Lemma \ref{lucasbasic}, we have $u_{p-1}=\frac{1}{3a^2}((2-a)v_{p-1}-2v_p)$. Thus
   \[
   \frac{u_{p-1}}{p}=\frac{1}{3a^2}\left(\frac{(2-a)(v_{p-1}-2)}{p}-2\frac{v_p-(2-a)}{p}\right).
   \]
   If $p\equiv2\pmod3$, by Lemma \ref{lucasbasic}, we have $u_{p+1}=\frac{1}{3a^2}(2(a^2-a+1)v_{p}-(2-a)v_{p+1})$. Thus
   \[
   \frac{u_{p+1}}{p}=\frac{1}{3a^2}\left(\frac{2(a^2-a+1)(v_{p}-(2-a))}{p}-\frac{(2-a)(v_{p+1}-2(a^2-a+1))}{p}\right).
   \]
   Thus by Lemmas \ref{quotient-v} and \ref{vp/p}, we can prove this theorem.
 \end{proof}

 Given a value of $a$, by Theorem \ref{3lucas+}, we can obtain a concrete congruence for a specific Lucas quotient. We provide one such example.

   \begin{corollary}
   Let $p\neq3,7$ be an odd prime, and $\{u_n\}_{n\geq0}$ be the Lucas sequence defined as
   \[u_0=0,u_1=1,u_{n+1}=4u_n-7u_{n-1}\;\textup{for}\;n\geq1.\]
   Then we have, if $p\equiv1\pmod3$,
   \[\frac{u_{p-1}}{p}\equiv\frac16\sum\limits_{k=1}^{\frac{p-1}{3}}\frac{8^k}k +\frac13q_p(7) \pmod p;\]
   if $p\equiv2\pmod3$,
          \[\frac{u_{p+1}}{p}\equiv-\frac76 \sum\limits_{k=1}^{\frac{p-2}{3}}\frac{8^k}k -\frac73q_p(7) \pmod p. \]
  \end{corollary}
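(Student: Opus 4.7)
The plan is to specialize Theorem \ref{3lucas+} to $a = -2$ and then simplify. First I would verify that this value of $a$ reproduces the recurrence of the corollary: with $a = -2$, we have $2 - a = 4$ and $a^2 - a + 1 = 4 + 2 + 1 = 7$, so the general recurrence $u_{n+1} = (2-a)u_n - (a^2 - a + 1)u_{n-1}$ collapses to $u_{n+1} = 4u_n - 7u_{n-1}$, which matches the sequence in the corollary's statement (and matches the initial values $u_0 = 0$, $u_1 = 1$).

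Next I would check the hypothesis of Theorem \ref{3lucas+}, namely $p \nmid 3a(2-a)(a^3+1)$. With $a = -2$, this product is $3 \cdot (-2) \cdot 4 \cdot (-7) = 168 = 2^3 \cdot 3 \cdot 7$, so the divisibility condition reduces, for our odd prime $p$, to the assumption $p \neq 3, 7$ already present in the corollary.

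Finally I would substitute $a = -2$ into the two formulas of Theorem \ref{3lucas+}. The key simplifying observation is that $a + 1 = -1$, so $q_p(a+1) = q_p(-1) = \frac{(-1)^{p-1} - 1}{p} = 0$ since $p$ is odd; this makes the $2(a+1)q_p(a+1)$ term disappear. The remaining substitutions are purely mechanical: $(-a)^{3k} = 2^{3k} = 8^k$, $a^2 = 4$, $2 - a = 4$, and $a^2 - a + 1 = 7$, so in the $p \equiv 1 \pmod{3}$ case the prefactor $\tfrac{2}{3a^2} = \tfrac{1}{6}$ appears in front of the sum and the coefficient of $q_p(7)$ becomes $\tfrac{1}{3a^2} \cdot (2-a) \cdot 1 = \tfrac{4}{12} = \tfrac{1}{3}$; in the $p \equiv 2 \pmod{3}$ case one obtains $-\tfrac{2(a^2-a+1)}{3a^2} = -\tfrac{7}{6}$ and $-\tfrac{a^2-a+1}{3a^2}(2-a) = -\tfrac{7}{3}$. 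This yields the two claimed congruences. There is essentially no obstacle; the only step that is not purely arithmetic is noticing that $q_p(-1)$ vanishes, which allows the two-quotient expression of Theorem \ref{3lucas+} to reduce to a single Fermat quotient $q_p(7)$.
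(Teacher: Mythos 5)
Your proposal is correct and follows exactly the paper's route: the paper's entire proof is ``Set $a=-2$ in Theorem \ref{3lucas+}'', and you have simply carried out that substitution explicitly, including the correct verification of the hypothesis $p\nmid 3a(2-a)(a^3+1)=\pm168$ and the key simplification $q_p(a+1)=q_p(-1)=0$. All the arithmetic checks out, so there is nothing to add.
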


  \begin{proof}
    Set $a=-2$ in Theorem \ref{3lucas+}.
  \end{proof}

\section{$\Delta_4(r,n)$ and Related Lucas Quotients}
In this section, we consider the calculation of $\Delta_4(r,n)$.
\subsection{General Properties}
\begin{theorem}\label{m4}
  Let $\{u_n\}_{n\geq0}$ be the Lucas sequence defined as $$u_0=0,\;u_1=1,\;u_{n+1}=2u_n-(a^2+1)u_{n-1}\mbox{ for }n\geq1.$$ Then we have, for $n\geq1$,
  \begin{align*}
  \Delta_4(0,n)&=2u_n-2(a^2+1)u_{n-1}=2u_{n+1}-2u_n,\\
  \Delta_4(1,n)&=2au_n,\\
  \Delta_4(2,n)&=-2u_n+2(a^2+1)u_{n-1}=-2u_{n+1}+2u_n,\\
  \Delta_4(3,n)&=-2au_n.
  \end{align*}
\end{theorem}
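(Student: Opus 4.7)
The plan is to follow exactly the template used for Theorem~\ref{m3}: extract a two-term linear recurrence for $\Delta_4(r,n)$ from the general result of Theorem~\ref{evenm}, verify the claimed identities for the first two values of $n$, and then conclude by induction on $n$.

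The key input is the polynomial $Q_1(x)$ from Proposition~\ref{polyQ}. As computed in the list of values displayed just after Proposition~\ref{polyQ}, $Q_1(x)=x^2-2x+(1+a^2)$, so the coefficients are $c_0=1+a^2$, $c_1=-2$, $c_2=1$. Theorem~\ref{evenm} (with $m=4$) therefore gives
$$\Delta_4(r,n+2)=2\,\Delta_4(r,n+1)-(a^2+1)\,\Delta_4(r,n)\quad\text{for }n\ge 1,$$
which is precisely the recurrence defining $\{u_n\}$. Consequently every sequence appearing on the right-hand side of the theorem also satisfies this recurrence, and it suffices to match the two sides for $n=1$ and $n=2$.

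For the base cases I would compute $\Delta_4(r,n)$ directly from its definition, using that the only relevant binomials are $\binom{1}{0}=\binom{1}{1}=1$ for $n=1$ and $\binom{2}{0}=\binom{2}{2}=1$, $\binom{2}{1}=2$ for $n=2$, together with the expansions of $(1\pm a)^n$. Each of the eight verifications (four values of $r$ times two values of $n$) is a one-line check; for instance, $\Delta_4(0,1)=4-(1+a)-(1-a)=2=2u_1-2(a^2+1)u_0$, and $\Delta_4(1,2)=8a-(1+a)^2+(1-a)^2=4a=2au_2$ since $u_2=2$. Once the base cases are in hand, a routine induction using the recurrence above closes the argument.

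Finally, the equivalence of the two closed-form expressions displayed for $\Delta_4(0,n)$ and $\Delta_4(2,n)$ is immediate from the Lucas recurrence: doubling $u_{n+1}=2u_n-(a^2+1)u_{n-1}$ rearranges to $2u_n-2(a^2+1)u_{n-1}=2u_{n+1}-2u_n$. There is no genuine conceptual obstacle here; the only care required is bookkeeping across the eight base-case computations so that the signs for odd $r$ (where $(-1)^r(1-a)^n$ flips) are handled correctly.
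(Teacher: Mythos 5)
Your proposal is correct and follows essentially the same route as the paper: derive the recurrence $\Delta_4(r,n+2)=2\Delta_4(r,n+1)-(a^2+1)\Delta_4(r,n)$ from Theorem \ref{evenm} via $Q_1(x)=x^2-2x+1+a^2$, check the base cases $n=1,2$, and induct. The only (immaterial) difference is that the paper halves the base-case work by first noting $\Delta_4(r,n)+\Delta_4(r+2,n)=2\Delta_2(r,n)=0$, so only $r=0,1$ need direct verification.
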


\begin{proof}
Since $u_{n+1}=2u_n-(a^2+1)u_{n-1}$ for $n\geq1$, we have $2u_{n+1}-2u_n=2u_n-2(a^2+1)u_{n-1}$.
It is easy to see that $\Delta_4(r,n)+\Delta_4(r+2,n)=2\Delta_2(r,n)=0$ for $n\geq1$. So we need only consider $\Delta_4(0,n)$ and $\Delta_4(1,n)$.

   Since $u_2=2$, one can verify the following simple facts:
   \begin{align*}
   \Delta_4(0,1)&=2=2u_1-2(a^2+1)u_{0},\\
   \Delta_4(0,2)&=-2a^2+2=2u_2-2(a^2+1)u_{1},\\
   \Delta_4(1,1)&=2a=2au_1,\\
   \Delta_4(1,2)&=4a=2au_2.\\
   \end{align*}
   By Theorem \ref{evenm}, for $n\geq1$, $$\Delta_4(r,n+2)=2\Delta_4(r,n+1)-(a^2+1)\Delta_4(r,n).$$
   Then we can prove the theorem by induction on $n$.
\end{proof}

\begin{theorem}\label{4lucas}
  Let $p\nmid a(a^4-1)$ be an odd prime, $\{u_n\}_{n\geq0}$ as in Theorem \ref{m4}, and
   $K_{p,4,r}(a)$ as in Definition \ref{defsum}. Then we have
    \begin{align*}
    \frac{u_{p-1}}{p}\equiv&\frac{2}{a(a^2+1)}(aK_{p,4,0}(a)-K_{p,4,1}(a))+\frac{2}{a^2+1}q_p(a)\\
    &+\frac{a^2-1}{2a(a^2+1)}(q_p(a+1)-q_p(a-1))\pmod p,\mbox{ if }p\equiv1\pmod4;
 \end{align*}
  \begin{align*}
 \frac{u_{p+1}}{p}\equiv&-\frac{2}{a}(aK_{p,4,0}(a)+K_{p,4,1}(a))-\frac{(a+1)^2}{2a}q_p(a+1)\\
 &+\frac{(a-1)^2}{2a}q_p(a-1)\pmod p,\mbox{ if }p\equiv3\pmod4.
 \end{align*}
\end{theorem}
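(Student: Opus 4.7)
The plan is to mimic the proof of Theorem \ref{3lucas}: form linear combinations of $\Delta_4(0,p)$ and $\Delta_4(1,p)$ that isolate $u_{p-1}$ (when $\varepsilon=+1$) and $u_{p+1}$ (when $\varepsilon=-1$), then expand these $\Delta$'s modulo $p^2$ via Lemma \ref{K}(3) and convert the resulting $(1\pm a)^{p\pm1}$ and $a^p$ terms into Fermat quotients via $x^{p-1}\equiv 1+pq_p(x)\pmod{p^2}$.

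First, I compute $D=B^2-4A=4-4(a^2+1)=-4a^2$, so $\left(\tfrac{D}{p}\right)=\left(\tfrac{-1}{p}\right)$; hence by Lemma \ref{lucascongru}, $p\mid u_{p-1}$ when $p\equiv1\pmod 4$ and $p\mid u_{p+1}$ when $p\equiv3\pmod 4$. Next, from Theorem \ref{m4} I read off
\[
\Delta_4(0,p)=2u_{p+1}-2u_p=2u_p-2(a^2+1)u_{p-1},\qquad \Delta_4(1,p)=2au_p.
\]
Eliminating $u_p$ gives the two key identities
\[
2a(a^2+1)\,u_{p-1}=\Delta_4(1,p)-a\,\Delta_4(0,p),\qquad 2a\,u_{p+1}=\Delta_4(1,p)+a\,\Delta_4(0,p).
\]

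For the case $p\equiv1\pmod 4$, Lemma \ref{K}(3) (noting $p\not\equiv 0\pmod 4$ and $p\equiv 1\pmod 4$) yields
\begin{align*}
\Delta_4(0,p)&\equiv 4-4pK_{p,4,0}(a)-(1+a)^p-(1-a)^p\pmod{p^2},\\
\Delta_4(1,p)&\equiv 4a^p-4pK_{p,4,1}(a)-(1+a)^p+(1-a)^p\pmod{p^2}.
\end{align*}
Substituting into $\Delta_4(1,p)-a\Delta_4(0,p)$, the constant $-4a$ cancels with $4a^p$ modulo $p$ via $a^p-a=apq_p(a)$, the $K$-terms collect into $4p(aK_{p,4,0}(a)-K_{p,4,1}(a))$, and the remaining binomial expressions combine as
\[
(a-1)(1+a)^p+(a+1)(1-a)^p=(a^2-1)\bigl[(a+1)^{p-1}-(a-1)^{p-1}\bigr]
\]
(using that $p$ is odd so $(1-a)^p=-(a-1)^p$), which equals $p(a^2-1)\bigl[q_p(a+1)-q_p(a-1)\bigr]$. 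Dividing by $2pa(a^2+1)$ gives the stated congruence for $u_{p-1}/p$.

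For the case $p\equiv3\pmod 4$, the term $\delta_{p\equiv1(4)}a^p$ disappears from the expansion of $\Delta_4(1,p)$. Forming $\Delta_4(1,p)+a\Delta_4(0,p)$, the $K$-terms collect into $-4p(aK_{p,4,0}(a)+K_{p,4,1}(a))$, and the binomial part collapses to $-(1+a)^{p+1}+(1-a)^{p+1}$. Since $p+1$ is even, $(1-a)^{p+1}=(a-1)^{p+1}$, and writing $(1\pm a)^{p+1}=(1\pm a)^2\cdot(1\pm a)^{p-1}$ with $(1\pm a)^{p-1}\equiv 1+pq_p(a\pm1)\pmod{p^2}$ (using that $(a-1)$ and $(a+1)$ are coprime to $p$ by hypothesis $p\nmid a^4-1$), the leading constant contribution $(a-1)^2-(a+1)^2=-4a$ cancels the $4a$ coming from the constant term of $\Delta_4(0,p)$. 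Dividing the remainder by $2ap$ gives the stated congruence for $u_{p+1}/p$.

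The main technical obstacle is purely bookkeeping: keeping track of the cancellations between the constant terms (the $4$'s and $4a$'s coming from $\delta_{0\equiv r(4)}$ and $\delta_{p\equiv r(4)}a^p$) and the constant parts of the binomial expansions $(1\pm a)^{p\pm 1}$, so that what remains is genuinely divisible by $p$ and can be divided out to produce the Fermat quotient expressions. No genuinely new idea beyond the template of Theorem \ref{3lucas} is required.
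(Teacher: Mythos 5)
Your proposal is correct and follows essentially the same route as the paper: the same two identities $2a(a^2+1)u_{p-1}=\Delta_4(1,p)-a\Delta_4(0,p)$ and $2au_{p+1}=\Delta_4(1,p)+a\Delta_4(0,p)$ from Theorem \ref{m4}, the same expansion of $\Delta_4(0,p)$ and $\Delta_4(1,p)$ modulo $p^2$ via Lemma \ref{K}(3) with the correct tracking of the $\delta_{p\equiv r(4)}a^p$ term in the two residue classes, and the same regrouping of $(1\pm a)^p$ into Fermat quotients. All the cancellations you describe (the $4a^p-4a$ term, the $(a^2-1)[(a+1)^{p-1}-(a-1)^{p-1}]$ combination, and the $4a$ versus $(a-1)^2-(a+1)^2$ cancellation) check out against the paper's computation.
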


\begin{proof}
Since $2^2-4(a^2+1)=-4a^2$, by Lemma \ref{lucascongru}, we have
$$p\mid u_{p-\left(\frac{-1}{p}\right)}.$$
By Theorem \ref{m4}, we have
\begin{equation}\label{4p-1}
 \Delta_4(1,p)-a\Delta_4(0,p)=2a(a^2+1)u_{p-1},
 \end{equation}
 and
 \begin{equation}\label{4p+1}
 \Delta_4(1,p)+a\Delta_4(0,p)=2au_{p+1}.
 \end{equation}
Then, by Lemma \ref{K}(3), if $p\equiv1\pmod4)$, we have
 $$\Delta_4(0,p)\equiv4-4pK_{p,4,0}(a)-(1+a)^p-(1-a)^{p}\pmod{p^2},$$
 $$\Delta_4(1,p)\equiv4a^p-4pK_{p,4,1}(a)-(1+a)^p+(1-a)^{p}\pmod{p^2}.$$
 Thus, by Eq.(\ref{4p-1}), we have
 \begin{align*}
 2a(a^2+1)u_{p-1}\equiv&4p(aK_{p,4,0}(a)-K_{p,4,1}(a))+4a(a^{p-1}-1)\\
 &+(a^2-1)\left[(a+1)^{p-1}-(a-1)^{p-1}\right]\pmod{p^2}.
 \end{align*}
 Hence
 \begin{align*}
 \frac{u_{p-1}}{p}\equiv&\frac{2}{a(a^2+1)}(aK_{p,4,0}(a)-K_{p,4,1}(a))+\frac{2}{a^2+1}q_p(a)\\
 &+\frac{a^2-1}{2a(a^2+1)}(q_p(a+1)-q_p(a-1))\pmod p.
  \end{align*}
  If $p\equiv3\pmod4$, by Lemma \ref{K}(3), we have
 $$\Delta_4(0,p)\equiv4-4pK_{p,4,0}(a)-(1+a)^p-(1-a)^{p}\pmod{p^2},$$
 $$\Delta_4(1,p)\equiv-4pK_{p,4,1}(a)-(1+a)^p+(1-a)^{p}\pmod{p^2}.$$
 Thus, by Eq.(\ref{4p+1}), we have
 \begin{align*}
 2au_{p+1}\equiv&-4p(aK_{p,4,0}(a)+K_{p,4,1}(a))-(a+1)^2((a+1)^{p-1}-1)\\
 &+(a-1)^2((a-1)^{p-1}-1)\pmod{p^2}.
 \end{align*}
 Hence
 \begin{align*}
 \frac{u_{p+1}}{p}\equiv&-\frac{2}{a}(aK_{p,4,0}(a)+K_{p,4,1}(a))-\frac{(a+1)^2}{2a}q_p(a+1)\\
 &+\frac{(a-1)^2}{2a}q_p(a-1)\pmod{p}.
 \end{align*}
\end{proof}

Given a value of $a$, by Theorem \ref{4lucas}, we can obtain a concrete congruence for a specific Lucas quotient. We provide one such example.
 \begin{corollary}
   Let $p>5$ be an odd prime, and $\{u_n\}_{n\geq0}$ be the Lucas sequence defined as $$u_0=0,\;u_1=1,\;u_{n+1}=2u_n-5u_{n-1}\mbox{ for }n\geq1.$$
   Then, if $p\equiv1\pmod4$,
   $$\frac{u_{p-1}}p\equiv\frac{1}{10}\sum\limits_{k=1}^{\frac{p-1}{4}}\frac{16^k}{k}+\frac {1}{40}\sum\limits_{k=1}^{\frac{p-1}{4}}\frac{16^k}{4k-3}
   +\frac25q_p(2)+\frac3{20}q_p(3)\pmod p;$$
    if $p\equiv3\pmod4$,
    $$\frac{u_{p+1}}p\equiv\frac{1}{8}\sum\limits_{k=1}^{\frac{p+1}{4}}\frac{16^k}{4k-3}-\frac 12\sum\limits_{k=1}^{\frac{p-3}{4}}\frac{16^k}{k}
   -\frac94q_p(3)\pmod p.$$
 \end{corollary}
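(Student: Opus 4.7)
The plan is to specialize Theorem \ref{4lucas} to $a=2$ and simplify the resulting expressions. First I would check that the hypothesis $p\nmid a(a^4-1)$ becomes $p\nmid 2\cdot 15$, which is guaranteed by $p>5$. Then, with $a=2$, the recurrence $u_{n+1}=2u_n-(a^2+1)u_{n-1}$ of Theorem \ref{m4} becomes $u_{n+1}=2u_n-5u_{n-1}$, which is exactly the Lucas sequence defined in the corollary, so the $u_n$ appearing in both statements coincide.

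Next I would rewrite the two sums $K_{p,4,0}(2)$ and $K_{p,4,1}(2)$ of Definition \ref{defsum} as sums in powers of $16$. For indices $k\equiv 0\pmod 4$, write $k=4j$ so that $(-2)^k=16^j$; for indices $k\equiv 1\pmod 4$, write $k=4j-3$ so that $(-2)^k=-\tfrac{1}{8}\,16^{j}$. The upper bound of $j$ depends on the residue of $p$ modulo $4$: when $p\equiv 1\pmod 4$ both sums run from $j=1$ to $j=(p-1)/4$; when $p\equiv 3\pmod 4$ the $k\equiv 0$ sum runs to $(p-3)/4$ while the $k\equiv 1$ sum runs to $(p+1)/4$. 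This gives
\[
K_{p,4,0}(2)=\tfrac{1}{4}\sum_{k}\tfrac{16^k}{k},\qquad K_{p,4,1}(2)=-\tfrac{1}{8}\sum_{k}\tfrac{16^k}{4k-3},
\]
with the appropriate upper limits.

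With these substitutions, plug $a=2$ (so $a^2+1=5$, $a^2-1=3$, $a+1=3$, $a-1=1$) into the formulas of Theorem \ref{4lucas}, and use $q_p(1)=0$ to drop the $q_p(a-1)$ term. For $p\equiv 1\pmod 4$ this yields
\[
\frac{u_{p-1}}{p}\equiv \frac{1}{5}\bigl(2K_{p,4,0}(2)-K_{p,4,1}(2)\bigr)+\frac{2}{5}q_p(2)+\frac{3}{20}q_p(3)\pmod p,
\]
and inserting the explicit sums produces the coefficients $1/10$ and $1/40$ stated in the corollary. The case $p\equiv 3\pmod 4$ is completely analogous, starting from $-\tfrac{2}{a}(aK_{p,4,0}(a)+K_{p,4,1}(a))-\tfrac{(a+1)^2}{2a}q_p(a+1)$ and producing the coefficients $-1/2$, $1/8$, and $-9/4$.

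There is no genuine obstacle here; the work is purely the bookkeeping described above. The one place to be careful is the parity of the upper summation limits in the two residue classes modulo $4$, since an off-by-one mistake would mis-align the $k\equiv 1\pmod 4$ sum in the case $p\equiv 3\pmod 4$ (where it must reach $j=(p+1)/4$, one past the $k\equiv 0\pmod 4$ range). Once that indexing is done correctly, the congruence follows directly.
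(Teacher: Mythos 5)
Your proposal is correct and follows essentially the same route as the paper, which simply sets $a=-2$ in Theorem \ref{4lucas}; your choice $a=2$ gives the identical recurrence (only $a^2$ enters) and, after the sign bookkeeping in $K_{p,4,1}$ and the identities $q_p(-2)=q_p(2)$, $q_p(-3)=q_p(3)$, $q_p(\pm1)=0$, the identical congruences. Your upper summation limits $(p-1)/4$, $(p-3)/4$, $(p+1)/4$ in the respective residue classes are also right.
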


 \begin{proof}
   Set $a=-2$ in Theorem \ref{4lucas}.
 \end{proof}

 \subsection{Further Results}

 In Theorem \ref{4lucas}, when we express the Lucas quotient, it involves two $K$'s, i.e., two sums. The following theorem can reduce the Lucas quotient to one sum.

 \begin{theorem}\label{4lucas+}
   With notation as in Theorem \ref{4lucas}. Let $p\nmid a(a^4-1)$ be an odd prime. We have:
   if $p\equiv1\pmod4$, then
   \begin{align*}
  \frac{u_{p-1}}{p}
  \equiv&\frac{1}{2a^2}\left(\sum\limits_{k=1}^{\frac{p-1}{4}}\frac{a^{4k}}{k}+(1+a)q_p(1+a)+(1-a)q_p(1-a)+q_p(a^2+1)\right)\\
  \equiv&\frac{1}{2a}\left(-4\sum\limits_{k=1}^{\frac{p-1}{4}}\frac{a^{4k-1}}{4k-1}+(1+a)q_p(1+a)-(1-a)q_p(1-a)\right)\\
  &-\frac{1}{2}q_p(a^2+1)\pmod p;
 \end{align*}
 if $p\equiv3\pmod 4$, then
  \begin{align*}
  \frac{u_{p+1}}{p}
  \equiv&-\frac{a^2+1}{2a^2}\left(\sum\limits_{k=1}^{\frac{p-3}{4}}\frac{a^{4k}}{k}+(1+a)q_p(1+a)+(1-a)q_p(1-a)+q_p(a^2+1)\right)\\
  \equiv&\frac{a^2+1}{2a}\left(4\sum\limits_{k=1}^{\frac{p+1}{4}}\frac{a^{4k-3}}{4k-3}-(1+a)q_p(1+a)+(1-a)q_p(1-a)\right)\\
  &+\frac{a^2+1}{2}q_p(a^2+1)\pmod p.
 \end{align*}
 \end{theorem}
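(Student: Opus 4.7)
The plan is to parallel the proof of Theorem \ref{3lucas+}: introduce the companion Lucas sequence $\{v_n\}_{n\ge0}$ with $v_0=2$, $v_1=2$, $v_{n+1}=2v_n-(a^2+1)v_{n-1}$, and decompose $u_{p\mp 1}/p$ in two different ways via the identities of Lemma \ref{lucasbasic}, so that in each decomposition only a single $K$-sum survives.

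Two building blocks feed every line. First, Theorem \ref{m4} gives $\Delta_4(0,p)=v_p$, so Lemma \ref{K}(3) at $r=0$ (where $\delta_{0\equiv 0(4)}=1$ and $\delta_{p\equiv 0(4)}=0$), combined with $(1\pm a)^p\equiv(1\pm a)+p(1\pm a)q_p(1\pm a)\pmod{p^2}$, yields
\[\frac{v_p-2}{p}\equiv -4K_{p,4,0}(a)-(1+a)q_p(1+a)-(1-a)q_p(1-a)\pmod p,\]
with $-4K_{p,4,0}(a)=-\sum_{k=1}^{N}a^{4k}/k$, the upper limit $N$ being $(p-1)/4$ or $(p-3)/4$ according to $p\bmod 4$. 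Second, applied to $\Delta_4(3,p)=-2au_p$ when $p\equiv 1\pmod 4$ (where $\delta_{p\equiv 3(4)}=0$ kills the $a^p$ term), Lemma \ref{K}(3) expresses $(u_p-1)/p$ in terms of $K_{p,4,3}(a)=-\sum_{k=1}^{(p-1)/4}a^{4k-1}/(4k-1)$; applied to $\Delta_4(1,p)=2au_p$ when $p\equiv 3\pmod 4$ (here $\delta_{p\equiv 1(4)}=0$) it expresses $(u_p+1)/p$ in terms of $K_{p,4,1}(a)=-\sum_{k=1}^{(p+1)/4}a^{4k-3}/(4k-3)$. Lemma \ref{quotient-v} then supplies $(v_{p-1}-2)/p\equiv q_p(a^2+1)$ for $p\equiv 1\pmod 4$ and $(v_{p+1}-2(a^2+1))/p\equiv(a^2+1)q_p(a^2+1)$ for $p\equiv 3\pmod 4$.

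The two forms in each residue class then arise from two different linear identities among $u_n$, $v_n$, $v_{n\pm 1}$ drawn from Lemma \ref{lucasbasic}. For $p\equiv 1\pmod 4$: the first form uses $Du_n=2v_{n+1}-Bv_n$ at $n=p-1$, giving $u_{p-1}=(v_{p-1}-v_p)/(2a^2)$, so that $u_{p-1}/p=[(v_{p-1}-2)/p-(v_p-2)/p]/(2a^2)$; the second form uses $v_n=2u_{n+1}-Bu_n$ at $n=p-1$, giving $u_{p-1}=u_p-v_{p-1}/2$, so that $u_{p-1}/p=(u_p-1)/p-(v_{p-1}-2)/(2p)$. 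For $p\equiv 3\pmod 4$: the first form uses $Du_n=Bv_n-2Av_{n-1}$ at $n=p+1$, giving $u_{p+1}=((a^2+1)v_p-v_{p+1})/(2a^2)$; the second uses $v_n=Bu_n-2Au_{n-1}$ at $n=p+1$, giving $u_{p+1}=v_{p+1}/2+(a^2+1)u_p$. Substituting the building blocks of the previous paragraph into each of these four identities produces the four congruences.

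The main obstacle is purely bookkeeping: for each of the four pairs (residue class of $p$, choice of form) one has to decide which of $\delta_{0\equiv r(4)}$, $\delta_{p\equiv r(4)}$ is nonzero in Lemma \ref{K}(3), then pick $r\in\{1,3\}$ so as to annihilate the $a^p$ contribution (otherwise a spurious $q_p(a)$ term would appear). A useful consistency check is that in each substitution the constant terms cancel (reflecting $u_{p-\varepsilon}\equiv 0$, $u_p\equiv\varepsilon$, and $v_{p-\varepsilon}\equiv 2A^{(p-\varepsilon)/2}\pmod p$), so that only the $p$-part survives; that surviving $p$-part is exactly what yields the four stated mod-$p$ expressions.
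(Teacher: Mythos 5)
Your proposal matches the paper's own proof essentially step for step: the same companion sequence $\{v_n\}$, the same two building blocks ($\Delta_4(0,p)=v_p$ giving $(v_p-2)/p$ via Lemma \ref{K}(3), and $\Delta_4(3,p)$ resp.\ $\Delta_4(1,p)$ giving $(u_p\mp1)/p$), the same four decompositions of $u_{p\mp1}$ from Lemma \ref{lucasbasic}, and the same use of Lemma \ref{quotient-v} for $(v_{p-\varepsilon}-2A^{(p-\varepsilon)/2}\cdot A^{0})/p$. The sign and index bookkeeping (e.g.\ $-4K_{p,4,0}(a)=-\sum a^{4k}/k$, $K_{p,4,3}(a)=-\sum a^{4k-1}/(4k-1)$) is correct, so the argument is sound as written.
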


 \begin{proof}
 Let $\{v_n\}_{n\geq0}$ be the Lucas sequence defined as
 \[v_0=2,v_1=2,v_{n+1}=2v_n-(a^2+1)v_{n-1}\;\textup{for}\;n\geq1.\]
 By Lemmas \ref{lucasbasic} and \ref{lucascongru}, we have $v_p=2u_p-2(a^2+1)u_{p-1}=2u_{p+1}-2u_p\equiv2\pmod p$.
 By Theorem \ref{m4} and Lemma \ref{lucasbasic}, we have $\Delta_4(0,p)=2u_{p+1}-2u_p=v_p$.
 By Lemma \ref{K}, we have
      $\Delta_4(0,p)=4\left[\begin{array}{c}p \\ 0\\\end{array}\right] _{4}(a)-(1+a)^p-(1-a)^p\equiv4-4pK_{p,4,0}(a)-(1+a)^p-(1-a)^p\pmod {p^2}$.
       Thus
       \begin{equation}\label{vp-2}
     \frac{v_p-2}{p}\equiv-\sum\limits_{k=1}^{[\frac p4]}\frac{a^{4k}}{k}-(1+a)q_p(1+a)-(1-a)q_p(1-a)\pmod p.
     \end{equation}
     If $p\equiv1\pmod4$, by Theorem \ref{m4} and Lemma \ref{K} we have
      $-2au_p=\Delta_4(3,p)=4\left[\begin{array}{c}p \\ 3\\\end{array}\right] _{4}(a)-(1+a)^p+(1-a)^p\equiv-4pK_{p,4,3}(a)-(1+a)^p+(1-a)^p\pmod {p^2}$.
      So we have
      \begin{equation}\label{up-1}
        \frac{u_p-1}{p}\equiv\frac{1}{2a}\left(4K_{p,4,3}(a)+(1+a)q_p(1+a)-(1-a)q_p(1-a)\right)\pmod p.
          \end{equation}
         By Lemma \ref{lucasbasic}, we have $u_{p-1}=\frac{1}{-4a^2}(2v_p-2v_{p-1})=\frac{1}{2a^2}(v_{p-1}-v_p)$ and $u_{p-1}=u_p-\frac12v_{p-1}$. By Lemma \ref{quotient-v} and Eq.(\ref{vp-2}), we have
       $$ \begin{array}{l}
         \frac{u_{p-1}}{p}=\frac{1}{2a^2}\left( \frac{v_{p-1}-2}p-\frac{v_{p}-2}p\right)\\
         \equiv\frac{1}{2a^2}\left(\sum\limits_{k=1}^{\frac{p-1}{4}}\frac{a^{4k}}{k}+(1+a)q_p(1+a)+(1-a)q_p(1-a)+q_p(a^2+1)\right)\pmod p.
        \end{array}$$
        Similarly, by Lemma \ref{quotient-v} and Eq.(\ref{up-1}), we have
         \begin{align*}
         \frac{u_{p-1}}{p}
         = &\frac{u_p-1}p-\frac{1}{2}\frac{v_{p-1}-2}p\\
         \equiv&\frac{1}{2a}\left(-4\sum\limits_{k=1}^{\frac{p-1}{4}}\frac{a^{4k-1}}{4k-1}+(1+a)q_p(1+a)-(1-a)q_p(1-a)\right)\\
         &-\frac{1}{2}q_p(a^2+1)\pmod p.
         \end{align*}
         If $p\equiv3\pmod4$, by Theorem \ref{m4} and Lemma \ref{K} we have
         $2au_p=\Delta_4(1,p)=4\left[\begin{array}{c}p \\ 1\\\end{array}\right] _{4}(a)-(1+a)^p+(1-a)^p\equiv-4pK_{p,4,1}(a)-(1+a)^p+(1-a)^p\pmod {p^2}$.
         So we have
      \begin{equation}\label{up+1}
        \frac{u_p+1}{p}\equiv\frac{1}{2a}\left(-4K_{p,4,1}(a)-(1+a)q_p(1+a)+(1-a)q_p(1-a)\right)\pmod p.
          \end{equation}
         By Lemma \ref{lucasbasic}, we have $u_{p+1}=\frac{1}{-4a^2}(2v_{p+1}-2(a^2+1)v_p)=\frac{1}{2a^2}((a^2+1)v_p-v_{p+1})$ and $u_{p+1}=(a^2+1)u_p+\frac12v_{p+1}$. By Lemma \ref{quotient-v} and Eq.(\ref{vp-2}), we have
          $$ \begin{array}{l}
         \frac{u_{p+1}}{p}=\frac{1}{2a^2}\left( (a^2+1)\frac{v_p-2}{p}-\frac{v_{p+1}-2(a^2+1)}{p}\right)\\
         \equiv-\frac{a^2+1}{2a^2}\left(\sum\limits_{k=1}^{\frac{p-3}{4}}\frac{a^{4k}}{k}+(1+a)q_p(1+a)+(1-a)q_p(1-a)\right)\\
         -\frac{a^2+1}{2a^2}q_p(a^2+1)\pmod p.
        \end{array}$$
         Similarly, by Lemma \ref{quotient-v} and Eq.(\ref{up+1}), we have
         \begin{align*}
         \frac{u_{p+1}}{p}
         = &(a^2+1)\frac{u_p+1}p+\frac{1}{2}\frac{v_{p+1}-2(a^2+1)}p\\
         \equiv&\frac{a^2+1}{2a}\left(4\sum\limits_{k=1}^{\frac{p+1}{4}}\frac{a^{4k-3}}{4k-3}-(1+a)q_p(1+a)+(1-a)q_p(1-a)\right)\\
         &+\frac{a^2+1}{2}q_p(a^2+1)\pmod p.
         \end{align*}
 \end{proof}

 Given a value of $a$, by Theorem \ref{4lucas+}, we can obtain a concrete congruence for a specific Lucas quotient. We provide one such example.

  \begin{corollary}
     Let $p>5$ be an odd prime, and $\{u_n\}_{n\geq0}$ be the Lucas sequence defined as
     \[u_0=0,u_1=1,u_{n+1}=2u_n-5u_{n-1}\;\textup{for}\;n\geq1.\]
      Then we have: if $p\equiv1\pmod4$,
      \begin{align*}
     \frac{u_{p-1} }{p}
     \equiv&\frac 18 \sum\limits_{k=1}^{\frac{p-1}{4}}\frac{16^k}{k}+\frac 38q_p(3)+\frac 18q_p(5)\\
     \equiv&-\frac12\sum\limits_{k=1}^{\frac{p-1}{4}}\frac{16^k}{4k-1}+\frac34q_p(3)-\frac12q_p(5)\pmod p;
   \end{align*}
     if $p\equiv3\pmod4$,
     \begin{align*}
     \frac{u_{p+1} }{p}
     \equiv&-\frac 58 \sum\limits_{k=1}^{\frac{p-3}{4}}\frac{16^k}{k}-\frac {15}8q_p(3)-\frac 58q_p(5)\\
     \equiv&\frac58\sum\limits_{k=1}^{\frac{p+1}{4}}\frac{16^k}{4k-3}-\frac{15}4q_p(3)+\frac52q_p(5)\pmod p.
   \end{align*}
  \end{corollary}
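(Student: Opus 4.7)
The plan is simply to specialize Theorem \ref{4lucas+} to $a = -2$. The recurrence $u_{n+1} = 2u_n - 5u_{n-1}$ matches Theorem \ref{m4} with $a^2 + 1 = 5$, and I choose $a = -2$ (rather than $a = 2$) so that the sums involve $16^k = a^{4k}$ cleanly. The hypothesis $p \nmid a(a^4 - 1) = (-2)(15)$ required by Theorem \ref{4lucas+} reduces precisely to $p \neq 2, 3, 5$, which is guaranteed by $p > 5$ odd.

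The substitution proceeds by evaluating the ingredients of Theorem \ref{4lucas+}. The global coefficients become $\frac{1}{2a^2} = \frac{1}{8}$, $\frac{1}{2a} = -\frac{1}{4}$, $\frac{a^2+1}{2a^2} = \frac{5}{8}$, $\frac{a^2+1}{2a} = -\frac{5}{4}$, and $\frac{a^2+1}{2} = \frac{5}{2}$. Since $1+a = -1$, we have $q_p(1+a) = q_p(-1) = \frac{(-1)^{p-1}-1}{p} = 0$, so every term containing the factor $(1+a)q_p(1+a)$ drops out. Since $1-a = 3$, we have $(1-a)q_p(1-a) = 3\,q_p(3)$, and $q_p(a^2+1) = q_p(5)$. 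On the exponent side, $a^{4k} = 16^k$, $a^{4k-1} = -\tfrac{1}{2}\,16^k$, and $a^{4k-3} = -\tfrac{1}{8}\,16^k$.

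Plugging these into the four congruences of Theorem \ref{4lucas+} and collecting constants yields exactly the four congruences of the corollary. For instance, in the $p \equiv 1 \pmod 4$ case the first expression gives $\frac{1}{8}\bigl(\sum 16^k/k + 3\,q_p(3) + q_p(5)\bigr)$, and the second gives $-\frac{1}{4}\bigl(-4\cdot(-\tfrac{1}{2})\sum 16^k/(4k-1) - 3\,q_p(3)\bigr) - \frac{1}{2}\,q_p(5)$, which simplifies to the stated formula. The $p \equiv 3 \pmod 4$ lines are analogous.

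Since everything reduces to careful bookkeeping, there is no genuine obstacle. The only point that demands attention is the $p \equiv 3 \pmod 4$ expression containing $a^{4k-3}$: the negative coefficient $\frac{a^2+1}{2a} = -\frac{5}{4}$ must be combined with the negative sign hidden in $a^{4k-3} = -\tfrac{1}{8}\,16^k$ to produce the claimed positive constant $+\frac{5}{8}$ in front of $\sum 16^k/(4k-3)$, and likewise to reconcile the signs of the $q_p(3)$ and $q_p(5)$ contributions.
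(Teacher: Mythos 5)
Your proposal is correct and is exactly the paper's proof, which simply sets $a=-2$ in Theorem \ref{4lucas+}; your sign bookkeeping (in particular $q_p(-1)=0$, $a^{4k-1}=-\tfrac12\cdot16^k$, and $a^{4k-3}=-\tfrac18\cdot16^k$) checks out and reproduces all four congruences.
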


  \begin{proof}
    Set $a=-2$ in Theorem \ref{4lucas+}.
  \end{proof}

 \section{$\Delta_6(r,n)$}
In this section, we consider the calculation of $\Delta_6(r,n)$.
 \begin{theorem}\label{m6}
  Let $\{V_n\}_{n\geq0}$ be the Lucas sequences defined as
   $$V_0=2,\;V_1=a+2,\;V_{n+1}=(a+2)V_n-(a^2+a+1)V_{n-1}\mbox{ for }n\geq1.$$
  Let $\{v_n\}_{n\geq0}$ be the Lucas sequences defined as
   $$v_0=2,\;v_1=2-a,\;v_{n+1}=(2-a)v_n-(a^2-a+1)v_{n-1}\mbox{ for }n\geq1.$$
  Then we have, for $n\geq1$,
  \begin{align*}
  \Delta_6(0,n)&=V_n+v_n,\\
  \Delta_6(1,n)&=-\frac 1aV_n+\frac{a^2+a+1}{a}V_{n-1}-\frac 1av_n+\frac{a^2-a+1}{a}v_{n-1},\\
  \Delta_6(2,n)&=-\frac {a+1}aV_n+\frac{a^2+a+1}{a}V_{n-1}-\frac {a-1}av_n-\frac{a^2-a+1}{a}v_{n-1},\\
  \Delta_6(3,n)&=-V_n+v_n,\\
  \Delta_6(4,n)&=\frac {1}aV_n-\frac{a^2+a+1}{a}V_{n-1}-\frac {1}av_n+\frac{a^2-a+1}{a}v_{n-1},\\
  \Delta_6(5,n)&=\frac {a+1}aV_n-\frac{a^2+a+1}{a}V_{n-1}-\frac {a-1}av_n-\frac{a^2-a+1}{a}v_{n-1}.\\
  \end{align*}
\end{theorem}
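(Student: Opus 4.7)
The plan is to mimic the proofs of Theorems \ref{m3} and \ref{m4}: combine the recurrence supplied by Theorem \ref{evenm} with a verification at small initial values. The key structural fact that makes the two Lucas sequences $\{V_n\}$ and $\{v_n\}$ appear so naturally is the factorization
$$Q_2(x)=\bigl(x^2-(2-a)x+(a^2-a+1)\bigr)\bigl(x^2-(a+2)x+(a^2+a+1)\bigr)$$
noted immediately after Proposition \ref{polyQ}, whose two factors are precisely the characteristic polynomials of $\{v_n\}$ and $\{V_n\}$. By Theorem \ref{evenm} with $m=6$, $\Delta_6(r,n)$ satisfies the order-$4$ linear recurrence with characteristic polynomial $Q_2(x)$; any $\mathbb{Z}[a]$-linear combination of $V_n,V_{n-1},v_n,v_{n-1}$ satisfies this same recurrence, since each of $\{V_n\}$ and $\{v_n\}$ satisfies the order-$2$ recurrence given by one factor. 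For each fixed $r$, the claimed identity is therefore an equality of two solutions of the same order-$4$ recurrence, so it suffices to check it at any four consecutive initial values of $n$.

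To cut the bookkeeping roughly in half I would first record the identity
$$\Delta_6(r,n)+\Delta_6(r+3,n)=2\Delta_3(r,n),\qquad r\in\{0,1,2\},$$
which is immediate from the definition of $\Delta_m(r,n)$ together with the fact that the class $k\equiv r\pmod 3$ is the disjoint union of $k\equiv r\pmod 6$ and $k\equiv r+3\pmod 6$ (with cancellation of the correction terms $(1+a)^n$ and $(-1)^r(1-a)^n$). Combined with the formulas of Remark \ref{m3v} for $\Delta_3(r,n)$, this reduces the theorem to the three cases $r=0,1,2$, the remaining three formulas being obtained by subtraction. In those three cases the $v$-part of the right-hand side is already forced to equal $\Delta_3(r,n)$, so only the $V$-part $E(r,n):=\Delta_6(r,n)-\Delta_3(r,n)$ has to be identified. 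Since by Lemma \ref{basicsum} the expression for $E(r,n)$ involves only the two roots $1+a\zeta_6$ and $1+a\zeta_6^5$ of the $V$-characteristic polynomial, the sequence $E(r,n)$ already satisfies the order-$2$ recurrence of $\{V_n\}$; hence pinning down $E(r,n)=\alpha V_n+\beta V_{n-1}$ reduces to checking just two initial values.

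The remaining work is the direct but mildly tedious computation of initial values: $\Delta_6(r,n)$ for small $n$ from the binomial definition, $V_0,V_1$ and $v_0,v_1$ from the defining recurrences, and the matching of the linear combinations. This algebraic bookkeeping, carried out in three cases, is the only real obstacle in the proof; conceptually, all the machinery (the factorization of $Q_2$, Theorem \ref{evenm}, the $\Delta_3$ formulas of Remark \ref{m3v}, and the linear identities relating $V_{n+1},V_n,V_{n-1}$ from Lemma \ref{lucasbasic}) is already in place, and once the three base cases are verified the standard induction using the order-$4$ recurrence concludes the proof.
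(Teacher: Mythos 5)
Your proposal is correct, and its skeleton is the same as the paper's: verify initial values, use the order-$4$ recurrence supplied by Theorem \ref{evenm} (equivalently, the factorization of $Q_2(x)$ into the characteristic polynomials of $\{v_n\}$ and $\{V_n\}$), and exploit relations between $\Delta_6$ and $\Delta_3$ so that not all six residues need independent treatment. The difference is in how much induction you actually have to do. The paper proves the $r=0$ and $r=1$ formulas by induction on the full order-$4$ recurrence, which forces it to verify four base cases $n=1,2,3,4$ for each of these residues, and then obtains $r=2,\dots,5$ from the identities $\Delta_6(r,n)+\Delta_6(r+2,n)+\Delta_6(r+4,n)=0$ and $\Delta_6(r,n)+\Delta_6(r+3,n)=2\Delta_3(r,n)$ together with Remark \ref{m3v}. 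Your refinement is the observation that, in Lemma \ref{basicsum}, the $l=2,4$ terms of $\Delta_6(r,n)$ are exactly $\Delta_3(r,n)$ (because $\zeta_6^2,\zeta_6^4$ are the primitive cube roots of unity), so $E(r,n)=\Delta_6(r,n)-\Delta_3(r,n)=\zeta_6^{-r}(1+a\zeta_6)^n+\zeta_6^{r}(1+a\zeta_6^{-1})^n$ involves only the two roots of $x^2-(a+2)x+(a^2+a+1)$ and hence satisfies the order-$2$ recurrence of $\{V_n\}$. Since that recurrence has discriminant $-3a^2\neq0$ for the fixed $a\neq0,\pm1$, the sequences $V_n$ and $V_{n-1}$ span its solution space, so two base cases per residue suffice instead of four. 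This is a genuine, if modest, improvement: it halves the bookkeeping and makes the appearance of the two factors of $Q_2(x)$ conceptually transparent rather than something recovered after the fact. Everything you invoke (the factorization of $Q_2$, Remark \ref{m3v}, and the disjoint-union identity relating residues mod $6$ and mod $3$) is already established in the paper, so the argument closes.
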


\begin{proof}
   Since $V_2=-a^2+2a+2,v_2=-a^2-2a+2,V_3=-2a^3-3a^2+3a+2,v_3=2a^3-3a^2-3a+2,V_4=-a^4-8a^3-6a^2+4a+2,v_4=-a^4+8a^3-6a^2-4a+2$,
   one can verify the following simple facts:
   \begin{align*}
   &\Delta_6(0,1)=4=V_1+v_1,\\
   &\Delta_6(0,2)=-2a^2+4=V_2+v_2,\\
   &\Delta_6(0,3)=-6a^2+4=V_3+v_3,\\
   &\Delta_6(0,4)=-2a^4-12a^2+4=V_4+v_4;\\
   &\Delta_6(1,1)=4a=-\frac 1aV_1+\frac{a^2+a+1}{a}V_0-\frac 1av_1+\frac{a^2-a+1}{a}v_0,\\
   &\Delta_6(1,2)=8a=-\frac 1aV_2+\frac{a^2+a+1}{a}V_1-\frac 1av_2+\frac{a^2-a+1}{a}v_1,\\
   &\Delta_6(1,3)=-2a^3+12a=-\frac 1aV_3+\frac{a^2+a+1}{a}V_2-\frac 1av_3+\frac{a^2-a+1}{a}v_2,\\
   &\Delta_6(1,4)=-8a^3+16a=-\frac 1aV_4+\frac{a^2+a+1}{a}V_3-\frac 1av_4+\frac{a^2-a+1}{a}v_3.
   \end{align*}
   By Theorem \ref{evenm}, we have, for $n\geq1$,
   \begin{align*}
    \Delta_6(r,n+4)=&4\Delta_6(r,n+3)-(a^2+6)\Delta_6(r,n+2)\\
    &+(2a^2+4)\Delta_6(r,n+1)-(a^4+a^2+1)\Delta_6(r,n).
     \end{align*}
   Thus we can prove the theorem by induction on $n$ for $r=0,1$.

   It is easy to see that $\Delta_6(r,n)+\Delta_6(r+2,n)+\Delta_6(r+4,n)=3\Delta_2(r,n)=0$ and $\Delta_6(r,n)+\Delta_6(r+3,n)=2\Delta_3(r,n)$.
   Hence $\Delta_6(2,n)=-\Delta_6(0,n)-\Delta_6(4,n)=-\Delta_6(0,n)+\Delta_6(1,n)-2\Delta_3(1,n)$. Thus, by Remark \ref{m3v} and the formulae for
   $\Delta_6(0,n)$ and $\Delta_6(1,n)$, we can obtain the formula for $\Delta_6(2,n)$. Finally, by Remark \ref{m3v} and the formulae for
   $\Delta_6(0,n),\Delta_6(1,n)$ and $\Delta_6(2,n)$, we can obtain the formulae for $\Delta_6(3,n),\Delta_6(4,n)$ and $\Delta_6(5,n)$.
\end{proof}

Note that, for $m=6$, since the Lucas sequences in Theorem \ref{m6} have already appeared in Theorem \ref{m3}, we can not obtain any new Lucas quotient.

\vspace{0.5cm}

\noindent \textbf{Acknowledgments}\quad The work of this paper
was supported by the NNSF of China (Grant No. 11471314), and the National Center for Mathematics and Interdisciplinary Sciences, CAS.

\end{document}